\documentclass[11pt]{article}

\usepackage[margin=1.1in]{geometry}
\usepackage{amsmath,amssymb,amsfonts,amsthm,mathtools}
\usepackage{bm}
\usepackage{hyperref}
\usepackage{enumitem}
\usepackage{mathrsfs}
\usepackage{microtype}

\hypersetup{
  colorlinks=true,
  linkcolor=blue,
  citecolor=blue,
  urlcolor=blue
}

\newtheorem{theorem}{Theorem}[section]
\newtheorem{proposition}[theorem]{Proposition}
\newtheorem{lemma}[theorem]{Lemma}
\newtheorem{corollary}[theorem]{Corollary}
\newtheorem{definition}[theorem]{Definition}
\newtheorem{assumption}[theorem]{Assumption}
\newtheorem{remark}[theorem]{Remark}

\newcommand{\R}{\mathbb{R}}

\newcommand{\E}{\mathbb{E}}
\newcommand{\Tr}{\operatorname{Tr}}
\newcommand{\FP}{\operatorname{FP}}
\newcommand{\Dom}{\operatorname{Dom}}
\newcommand{\Id}{\operatorname{Id}}
\newcommand{\dd}{\,\mathrm{d}}
\newcommand{\braket}[2]{\left\langle #1,#2\right\rangle}
\newcommand{\norm}[1]{\left\lVert #1\right\rVert}

\title{A Quadratic-Form Representation of the Scalar Casimir Trace
from Codimension-Three Riesz Reduction}
\author{
Irshadullah Khan\thanks{\url{https://www.researchgate.net/profile/Irshadullah_Khan/research}}\\
Department of Mathematics\\
Quaid-i-Azam University\thanks{Visiting Faculty under the UNDP TOKTEN programme.}\\
Islamabad, Pakistan\\
\texttt{irshadk2@gmail.com}
\and
\href{https://orcid.org/0000-0001-8823-6752}{Bilal Khan}\thanks{\url{https://engineering.lehigh.edu/faculty/bilal-khan}}\\
Department of Computer Science\\
Lehigh University\\
Bethlehem, PA, USA\\
\texttt{bik221@lehigh.edu}
}

\date{}

\begin{document}
\maketitle

\begin{abstract}
Under a prescribed heat-regularized Gaussian source covariance, we give a
quadratic-form representation of the scalar Casimir trace associated with a
codimension-three Riesz reduction.  For a product operator
\(L_M=L_B-\Delta_\perp\), with \(L_B\) positive self-adjoint and bounded
below, transverse reduction of the ambient Riesz operator \(L_M^{-s}\)
produces the brane multiplier \(L_B^{m/2-s}\), up to an explicit
Gamma-function constant.  The exponent \(s=1+m/2\) is therefore the critical
Riesz exponent for obtaining the ordinary brane Green operator \(L_B^{-1}\);
in codimension three this gives \(s=5/2\).

Using this induced Green kernel, we prescribe a Gaussian generalized scalar
source with covariance proportional to \(L_B^{3/2}e^{-\tau L_B}\).  The
expectation of its quadratic Green-kernel energy is then exactly the
heat-regularized scalar Casimir trace
\[
    \frac{\hbar c}{2}
    \operatorname{Tr}\!\left(L_B^{1/2}e^{-\tau L_B}\right).
\]
With the same finite-part prescription, the identity specializes in the
Dirichlet parallel-plate geometry to the standard scalar finite part.

We also record a deterministic flat Green-energy calibration at the plate
scale.  Within the plate-compatible rectangular aspect-ratio family, the
cubical cell is selected by spectral, heat-trace, and Green-energy extremal
criteria, and the associated comparison coefficient is the corresponding
extremal calibration value.  The construction is a scalar spectral
representation theorem; no electromagnetic, gravitational, brane-dynamical,
or fundamental-constant identification is asserted.
\end{abstract}

\section{Introduction}

This note studies a scalar spectral representation of Casimir-type trace
functionals.  The basic observation is that, for a product operator
\(L_M=L_B-\Delta_\perp\) on \(B\times\R^m_\perp\), the transverse momentum
restriction of the fractional operator \(L_M^{-s}\) is a spectral multiplier
of \(L_B\).  In codimension three, the choice \(s=5/2\) gives an induced
brane operator proportional to \(L_B^{-1}\).

The second observation is stochastic.  If a heat-regularized Gaussian
generalized scalar source is assigned covariance proportional to
\(L_B^{3/2}e^{-\tau L_B}\), then the expectation of its quadratic energy with
respect to the induced Green operator \(gL_B^{-1}\) is exactly
\((\hbar c/2)\operatorname{Tr}(L_B^{1/2}e^{-\tau L_B})\).  Thus the construction
represents the regulated scalar Casimir trace as an expected quadratic form.

The individual ingredients used below are standard: heat-kernel and
zeta-function finite parts in Casimir theory, spectral calculus for positive
self-adjoint operators, Riesz-type kernel integrals, and Gaussian generalized
fields.  The point of the present note is not to introduce a new
regularization method or a new physical plate model, but to assemble these
ingredients into a single scalar representation theorem in which the
regularized trace is realized as the expectation of a quadratic form.

The parallel-plate specialization is included as a scalar benchmark.  It
fixes the standard Dirichlet scalar finite part used later when expressing the
finite-part trace in deterministic flat Green-energy units.  The corresponding
reference functional uses the same inverse-distance Green-kernel scaling as
the flat brane operator \(L_0^{-1}\); the cube appears as the unit reference
cell compatible with the normalization \(A=n^2a^2\).

The final part of the paper therefore has a narrower calibration purpose.
It does not enter the proof of the stochastic trace identity.  Rather, it
asks how the scalar finite-part Green energy obtained from the trace
representation compares with a deterministic flat \(L_0^{-1}\) Green energy
defined at the same plate scale.  Within the plate-compatible rectangular
aspect-ratio family, the cubical reference cell is characterized by spectral,
heat-trace, and Green-energy extremal properties.  The resulting comparison
coefficient is the extremal value of this calibration functional on that
restricted family.

The contribution is therefore organizational rather than a new
regularization method or a new physical plate model.  The paper isolates a
codimension-three Riesz reduction that produces the brane Green operator,
uses a prescribed heat-regularized Gaussian source covariance to realize the
scalar Casimir trace as an expected quadratic Green energy, and then
calibrates the resulting finite part against a deterministic flat Green
energy at the same plate scale.  The stochastic identity is useful in this
paper as a representation principle: it rewrites the scalar spectral trace
as a quadratic-form expectation with all constants and finite-part
normalizations explicit.

The result should be read as a representation theorem for scalar spectral
functionals.  The fractional ambient operator is not an ordinary local
six-dimensional propagator, and the Gaussian source covariance is a specified
convention rather than a derivation from a microscopic quantum field model.
No electromagnetic, gravitational, brane-dynamical, or fundamental-constant
identification is claimed.

\section{Standing framework and conventions}
\label{sec:framework}

The purpose of the present note is to formulate and prove a family of scalar operator identities and stochastic trace identities.  The framework consists of a brane Hilbert space, an ambient product operator, restricted Riesz-type mediators, and heat-regularized Gaussian generalized scalar sources.  The terms ``brane'', ``ambient'', ``mediator'', and ``source'' are used as compact mathematical labels for these objects.  Throughout the paper, the word ``brane'' denotes only the distinguished Hilbert-space factor \(B\) in the product geometry \(B\times\R^3_\perp\).  No brane dynamics, embedding equations, gravitational backreaction, or string-theoretic brane model is assumed.  Inverse-distance kernels appearing later are used as geometric normalization functionals.  Equalities involving scalar finite parts are equalities of the corresponding regularized and renormalized spectral quantities under the prescriptions stated below.

\subsection{Brane operator}

Let $B$ be a three-dimensional Riemannian manifold or a compactified three-dimensional region.  Let
\[
    H_B := L^2(B,\dd\mu_B)
\]
be the brane Hilbert space.  We assume that $L_B$ is a positive self-adjoint operator on $H_B$.  In the motivating examples one may take
\[
    L_B=-\Delta_B
\]
with specified boundary conditions.  The hypotheses used in the sequel are the following.

\begin{assumption}[Spectral hypotheses on $L_B$]
\label{ass:L_B}
The operator $L_B$ is self-adjoint and strictly positive: there is a constant
\[
    \ell_B>0
    \qquad\text{such that}\qquad
    L_B\geq \ell_B\Id
\]
in the sense of quadratic forms.  In particular, $L_B^{-1}$ is a bounded positive operator on $H_B$.  In Sections~\ref{sec:stochastic}--\ref{sec:renormalized}, for notational simplicity, we first treat the case of compact resolvent,
\[
    L_B u_j=\lambda_j u_j, \qquad
    \ell_B\leq\lambda_1\leq \lambda_2\leq\cdots,
\]
where $\{u_j\}_{j\geq1}$ is an orthonormal basis of $H_B$.  We further assume that
\[
    \Tr(e^{-\tau L_B})<\infty \qquad (\tau>0).
\]
The non-compact parallel-plate limit is obtained in Section~\ref{sec:parallel-plates} by imposing lateral periodic boundary conditions on a finite box, retaining the Dirichlet spectral gap in the normal direction, and then passing to energy per unit area.
\end{assumption}

\begin{remark}[Zero modes and reduced inverses]
If a non-negative operator $L_B$ has a finite-dimensional kernel, the same formal identities may be written on $\ker(L_B)^{\perp}$ with $L_B^{-1}$ replaced by the reduced inverse.  If there is no positive lower spectral bound, however, $L_B^{-1}$ is generally unbounded and the quadratic forms require additional domain hypotheses.  The main text avoids this complication by imposing the spectral gap in Assumption~\ref{ass:L_B}.
\end{remark}

\subsection{Ambient product geometry}

Let the ambient space be
\[
    M := B\times \R^3_{\perp},
\]
where $\R^3_\perp$ denotes three transverse directions.  The ambient Hilbert space is
\[
    H_M := H_B\otimes L^2(\R^3_\perp).
\]
Define the ambient product operator
\begin{equation}
    L_M := L_B\otimes \Id + \Id\otimes (-\Delta_\perp),
    \label{eq:ambient-operator}
\end{equation}
where $-\Delta_\perp$ is the non-negative Euclidean Laplacian on the transverse factor.  The fractional powers of $L_M$ are understood by the spectral theorem.

\begin{remark}[Why product geometry is assumed]
The product form \eqref{eq:ambient-operator} is a structural assumption, not a conclusion.  It is chosen because it permits a transparent computation of the brane-restricted ambient operator.  The construction is a formal representation theorem under this assumption.
\end{remark}

\subsection{Ambient mediator and brane restriction}

For $s>3/2$ define the ambient Riesz-type mediator
\[
    V_{M,s}:=\kappa_s L_M^{-s},
\]
where $\kappa_s$ is an overall normalization constant.  The word ``mediator'' is used for the spectral kernel $L_M^{-s}$ and for its brane-restricted operator.  It is not meant to imply an ordinary local propagator unless explicitly stated.  After Fourier transform in the transverse variables, $L_M^{-s}$ is the direct integral of the bounded operators $(L_B+|q|^2)^{-s}$.  The brane-restricted mediator is defined directly by the operator-valued integral
\begin{equation}
    V_{B,s}
    :=\kappa_s\int_{\R^3}\frac{\dd^3q}{(2\pi)^3}
      (L_B+|q|^2)^{-s}.
    \label{eq:brane-restriction-operator-integral}
\end{equation}
This integral is an operator-norm Bochner integral.  Indeed, by Assumption~\ref{ass:L_B},
\[
    \bigl\|(L_B+|q|^2)^{-s}\bigr\|
    \leq (\ell_B+|q|^2)^{-s},
\]
and the right-hand side is integrable over $\R^3$ exactly when $s>3/2$.  Equivalently, for $f,h\in H_B$,
\begin{equation}
    \braket{f}{V_{B,s}h}
    =\kappa_s\int_{\R^3}\frac{\dd^3q}{(2\pi)^3}
      \braket{f}{(L_B+|q|^2)^{-s}h}.
    \label{eq:brane-restriction-quadratic-form}
\end{equation}
Equations~\eqref{eq:brane-restriction-operator-integral}--\eqref{eq:brane-restriction-quadratic-form} are the rigorous meaning of ``brane restriction'' in this note.  No point-evaluation map on $L^2(\R^3)$ is used.

\section{Restriction of an ambient Riesz mediator to a codimension-three brane}
\label{sec:brane-restriction}

The first result is the elementary but central observation: in codimension three, the brane-to-brane restriction of $L_M^{-5/2}$ is proportional to $L_B^{-1}$.  Thus a fractional ambient mediator induces the brane Green operator determined by $L_B$.  In flat three-dimensional cases this Green operator has inverse-distance scaling, but the statement below is an operator identity rather than a physical identification.

\begin{lemma}[Transverse momentum integral]
\label{lem:transverse-integral}
Let $m\in\mathbb{N}$, let $s>m/2$, and let $\lambda>0$.  Then
\begin{equation}
    \int_{\R^m}\frac{\dd^m q}{(2\pi)^m}\,\frac{1}{(\lambda+|q|^2)^s}
    =
    \frac{1}{(4\pi)^{m/2}}\frac{\Gamma(s-m/2)}{\Gamma(s)}\lambda^{m/2-s}.
    \label{eq:transverse-integral-general}
\end{equation}
In particular, for $m=3$ and $s=5/2$,
\begin{equation}
    \int_{\R^3}\frac{\dd^3 q}{(2\pi)^3}\,\frac{1}{(\lambda+|q|^2)^{5/2}}
    =\frac{1}{6\pi^2}\frac{1}{\lambda}.
    \label{eq:transverse-integral-3-5half}
\end{equation}
\end{lemma}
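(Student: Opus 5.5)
The plan is to prove \eqref{eq:transverse-integral-general} by the Schwinger (Gamma-function) parametrization, which turns the power into a Gaussian. Since \(\lambda+|q|^2>0\) and \(s>0\), we write
\[
    (\lambda+|q|^2)^{-s}=\frac{1}{\Gamma(s)}\int_0^\infty t^{s-1}e^{-t(\lambda+|q|^2)}\dd t .
\]
We then insert this into the \(q\)-integral and exchange the order of integration. Tonelli's theorem justifies the exchange because the integrand is non-negative. The inner Gaussian integral is
\[
    \int_{\R^m}e^{-t|q|^2}\frac{\dd^m q}{(2\pi)^m}=(2\pi)^{-m}(\pi/t)^{m/2}=(4\pi t)^{-m/2}.
\]
This leaves
\[
    \frac{(4\pi)^{-m/2}}{\Gamma(s)}\int_0^\infty t^{s-m/2-1}e^{-t\lambda}\dd t .
\]
With the substitution \(u=t\lambda\), the remaining integral equals \(\Gamma(s-m/2)\lambda^{m/2-s}\).

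The only point requiring care is convergence of this last integral. At \(t\to0\) it needs \(s-m/2>0\), which is exactly the hypothesis \(s>m/2\). At \(t\to\infty\) the factor \(e^{-t\lambda}\) with \(\lambda>0\) controls it. Finiteness of the right-hand side then shows, again by Tonelli, that the original \(q\)-integral is finite, so no separate integrability argument is needed. As an independent check one could instead use polar coordinates and a Beta integral, \(\int_0^\infty r^{m-1}(\lambda+r^2)^{-s}\dd r=\tfrac12\lambda^{m/2-s}B(m/2,s-m/2)\), together with \(|S^{m-1}|=2\pi^{m/2}/\Gamma(m/2)\). This gives the same constant.

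For the specialization \eqref{eq:transverse-integral-3-5half}, we substitute \(m=3\) and \(s=5/2\). Then \(\Gamma(s-m/2)=\Gamma(1)=1\) and \(\Gamma(5/2)=\tfrac34\sqrt\pi\), and \((4\pi)^{3/2}=8\pi^{3/2}\). The constant is therefore \(1/(8\pi^{3/2}\cdot\tfrac34\sqrt\pi)=1/(6\pi^2)\), and the power is \(\lambda^{3/2-5/2}=\lambda^{-1}\). There is no real obstacle here; the only step needing care is the Tonelli exchange and the small-\(t\) convergence, which is where the hypothesis \(s>m/2\) is used.
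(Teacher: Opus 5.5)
Your proposal is correct and follows the paper's proof essentially step for step: the Schwinger parametrization, the Gaussian integral $(4\pi t)^{-m/2}$, the Gamma integral, and the same evaluation of the constant $1/(6\pi^2)$. Your explicit Tonelli justification and the convergence check as $t\to0$ spell out what the paper leaves implicit, and the polar-coordinate Beta-function check is a correct optional addition.
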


\begin{proof}
Use the Schwinger representation
\[
    (\lambda+|q|^2)^{-s}
    =\frac{1}{\Gamma(s)}\int_0^\infty t^{s-1}e^{-t(\lambda+|q|^2)}\dd t.
\]
Then
\[
    \int_{\R^m}\frac{\dd^mq}{(2\pi)^m}e^{-t|q|^2}
    =(4\pi t)^{-m/2}.
\]
Substitution gives
\[
    \frac{1}{(4\pi)^{m/2}\Gamma(s)}\int_0^\infty t^{s-1-m/2}e^{-t\lambda}\dd t
    =\frac{1}{(4\pi)^{m/2}}\frac{\Gamma(s-m/2)}{\Gamma(s)}\lambda^{m/2-s}.
\]
For $m=3$, $s=5/2$, this yields
\[
    (4\pi)^{-3/2}\frac{\Gamma(1)}{\Gamma(5/2)}\lambda^{-1}
    =\frac{1}{6\pi^2}\lambda^{-1}.
\]
\end{proof}

\begin{theorem}[Brane restriction of the codimension-three Riesz mediator]
\label{thm:brane-restriction}
Let $M=B\times\R^3_\perp$ and let
\[
    L_M=L_B\otimes\Id+\Id\otimes(-\Delta_\perp).
\]
For $s>3/2$, the brane-restricted mediator defined by \eqref{eq:brane-restriction-operator-integral} is the spectral multiplier
\begin{equation}
    V_{B,s}=\kappa_s C_{3,s} L_B^{3/2-s},
    \qquad
    C_{3,s}:=\frac{1}{(4\pi)^{3/2}}\frac{\Gamma(s-3/2)}{\Gamma(s)}.
    \label{eq:VB-general-s}
\end{equation}
In particular, writing $\kappa:=\kappa_{5/2}$,
\begin{equation}
    V_{B,5/2}
    =\frac{\kappa}{6\pi^2}L_B^{-1}.
    \label{eq:brane-restricted-Lminusfivehalf}
\end{equation}
Thus, after setting
\begin{equation}
    g:=\frac{\kappa}{6\pi^2},
    \label{eq:g-kappa-relation}
\end{equation}
this brane restriction is exactly
\begin{equation}
    V_B=gL_B^{-1}.
    \label{eq:VB-gLBminus1}
\end{equation}
\end{theorem}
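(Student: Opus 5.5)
The plan is to push the operator-valued integral \eqref{eq:brane-restriction-operator-integral} through the spectral theorem for $L_B$, so that it becomes the scalar integral of Lemma~\ref{lem:transverse-integral} evaluated along the spectrum. Let $E$ be the projection-valued spectral measure of $L_B$, supported in $[\ell_B,\infty)$ by Assumption~\ref{ass:L_B}. By functional calculus, for every $q\in\R^3$,
\[
    (L_B+|q|^2)^{-s}=\int_{[\ell_B,\infty)}(\lambda+|q|^2)^{-s}\,\dd E(\lambda).
\]
Working with the weak form \eqref{eq:brane-restriction-quadratic-form}, fix $f,h\in H_B$. Let $\mu_{f,h}$ be the complex measure $\braket{f}{E(\cdot)h}$, whose total variation is at most $\norm{f}\norm{h}$. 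Then
\[
    \braket{f}{V_{B,s}h}
    =\kappa_s\int_{\R^3}\frac{\dd^3q}{(2\pi)^3}\int_{[\ell_B,\infty)}(\lambda+|q|^2)^{-s}\,\dd\mu_{f,h}(\lambda).
\]

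The key step, and the only point needing care, is to exchange the two integrals by Fubini--Tonelli. The integrand is bounded in absolute value by $(\ell_B+|q|^2)^{-s}$, uniformly in $\lambda\ge\ell_B$. This majorant is integrable on $\R^3$ precisely because $s>3/2$. The product of it with $|\mu_{f,h}|$ is therefore integrable on $\R^3\times[\ell_B,\infty)$, which justifies the swap.

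After exchanging, the inner $q$-integral is exactly \eqref{eq:transverse-integral-general} with $m=3$ and $\lambda\ge\ell_B>0$, namely $C_{3,s}\lambda^{3/2-s}$. Hence
\[
    \braket{f}{V_{B,s}h}
    =\kappa_s C_{3,s}\int\lambda^{3/2-s}\,\dd\mu_{f,h}(\lambda)
    =\braket{f}{\kappa_s C_{3,s}L_B^{3/2-s}h}.
\]
Here $L_B^{3/2-s}$ is bounded, with norm at most $\ell_B^{3/2-s}$, since $3/2-s<0$. Since $f,h$ are arbitrary, this identifies the Bochner integral $V_{B,s}$ with the bounded operator $\kappa_s C_{3,s}L_B^{3/2-s}$, which is \eqref{eq:VB-general-s}. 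The Bochner integral and its weak form agree because bounded linear functionals commute with Bochner integrals.

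For the special case, set $s=5/2$. The value $C_{3,5/2}=(4\pi)^{-3/2}\Gamma(1)/\Gamma(5/2)=1/(6\pi^2)$ is already computed in \eqref{eq:transverse-integral-3-5half}. This gives \eqref{eq:brane-restricted-Lminusfivehalf}, and the definition \eqref{eq:g-kappa-relation} of $g$ then yields \eqref{eq:VB-gLBminus1}.

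I expect no genuine obstacle beyond the Fubini justification, which relies on the uniform spectral gap $\ell_B>0$. Without it, the majorant would fail to be integrable near $q=0$ at $\lambda=0$ when $s\ge 3/2+\tfrac{3}{2}\cdot 0$; this edge case is excluded by the assumptions.
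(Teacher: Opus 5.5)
Your proof is correct and follows the same route as the paper: evaluate the operator-valued transverse integral pointwise on the spectrum of $L_B$ via the spectral theorem, then use Lemma~\ref{lem:transverse-integral} to get $C_{3,s}\lambda^{3/2-s}$ and the value $1/(6\pi^2)$ at $s=5/2$. Your weak-form Fubini argument, with the spectral measure $\mu_{f,h}$ and the majorant $(\ell_B+|q|^2)^{-s}$, makes explicit the interchange that the paper states in one line; your closing aside about the case without a gap is garbled, but it plays no role in the argument.
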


\begin{proof}
By \eqref{eq:brane-restriction-operator-integral},
\[
    V_{B,s}
    =\kappa_s\int_{\R^3}\frac{\dd^3q}{(2\pi)^3}(L_B+|q|^2)^{-s},
\]
where the integral converges in operator norm.  The spectral theorem therefore permits the transverse integral to be evaluated pointwise on the spectrum of $L_B$:
\[
    \int_{\R^3}\frac{\dd^3q}{(2\pi)^3}(L_B+|q|^2)^{-s}
    =\left[\lambda\mapsto
      \int_{\R^3}\frac{\dd^3q}{(2\pi)^3}(\lambda+|q|^2)^{-s}
      \right](L_B).
\]
Lemma~\ref{lem:transverse-integral} gives
\[
    \int_{\R^3}\frac{\dd^3q}{(2\pi)^3}(\lambda+|q|^2)^{-s}
    =C_{3,s}\lambda^{3/2-s}.
\]
Hence
\[
    V_{B,s}=\kappa_s C_{3,s}L_B^{3/2-s}.
\]
For $s=5/2$, $C_{3,5/2}=1/(6\pi^2)$, which gives \eqref{eq:brane-restricted-Lminusfivehalf}.
\end{proof}

\begin{corollary}[General codimension]
\label{cor:general-codimension}
For an ambient product $B\times\R^m_\perp$ and mediator $(L_B-\Delta_\perp)^{-s}$ with $s>m/2$, the brane restriction is proportional to
\[
    L_B^{m/2-s}.
\]
Consequently, the induced brane operator is proportional to $L_B^{-1}$ precisely when
\[
    s=1+\frac{m}{2}.
\]
For $m=3$ this condition gives $s=5/2$.
\end{corollary}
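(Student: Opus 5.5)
The plan is to repeat the proof of Theorem~\ref{thm:brane-restriction} verbatim in general codimension $m$, with Lemma~\ref{lem:transverse-integral} already supplying the needed scalar integral for every $m\in\mathbb{N}$ and $s>m/2$. First I will define the brane restriction exactly as in \eqref{eq:brane-restriction-operator-integral}, now with $\dd^m q/(2\pi)^m$ over $\R^m$. I will then check that it is an operator-norm Bochner integral. The bound $\|(L_B+|q|^2)^{-s}\|\le(\ell_B+|q|^2)^{-s}$ from Assumption~\ref{ass:L_B} gives an integrable majorant on $\R^m$ precisely when $2s>m$, which is the stated hypothesis $s>m/2$.

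Next I will evaluate the integral through the spectral theorem. For each spectral value $\lambda\ge\ell_B$, Lemma~\ref{lem:transverse-integral} gives
\[
\int_{\R^m}\frac{\dd^m q}{(2\pi)^m}(\lambda+|q|^2)^{-s}=C_{m,s}\lambda^{m/2-s},
\qquad
C_{m,s}:=\frac{1}{(4\pi)^{m/2}}\frac{\Gamma(s-m/2)}{\Gamma(s)}.
\]
To justify passing the $q$-integral through the functional calculus, I will pair with $f,h\in H_B$ as in \eqref{eq:brane-restriction-quadratic-form}. Writing $\braket{f}{(L_B+|q|^2)^{-s}h}=\int(\lambda+|q|^2)^{-s}\dd\mu_{f,h}(\lambda)$ with the complex spectral measure $\mu_{f,h}$, Fubini applies because $|\mu_{f,h}|$ is finite and the integrand is dominated by $(\ell_B+|q|^2)^{-s}$. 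This step is the only point needing any care, and the domination makes it routine. The result is $V_{B,s}=\kappa_s C_{m,s}L_B^{m/2-s}$. Since $s>m/2$, the constant $C_{m,s}$ is finite and strictly positive, so the restriction is proportional to $L_B^{m/2-s}$ with a nonzero constant whenever $\kappa_s\neq0$.

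For the ``precisely when'' clause, sufficiency is immediate: $s=1+m/2$ gives exponent $-1$. For necessity, suppose $L_B^{m/2-s}=cL_B^{-1}$ for some constant $c$. Then $\lambda^{m/2-s+1}=c$ for every $\lambda$ in the spectrum. This forces $s=1+m/2$ as soon as the spectrum contains two distinct points, which holds in every case of interest (for compact resolvent, infinitely many eigenvalues). I will state this mild nondegeneracy explicitly; it is the only genuine caveat, since for $L_B$ a multiple of the identity every exponent would trivially work. Finally, setting $m=3$ gives $s=5/2$ and recovers $C_{3,5/2}=1/(6\pi^2)$, consistent with Theorem~\ref{thm:brane-restriction}.
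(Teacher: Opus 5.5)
Your proposal is correct and follows essentially the paper's route: the corollary is the proof of Theorem~\ref{thm:brane-restriction} run in general codimension $m$ using Lemma~\ref{lem:transverse-integral}, and your exponent matching $m/2-s=-1$ is what Proposition~\ref{prop:critical-riesz-exponent} does; the paper justifies the interchange through the Schwinger representation and operator-norm Fubini in Proposition~\ref{prop:Tms-heat-form}, whereas you use spectral measures, which is equally valid. Your caveat that necessity requires $\sigma(L_B)$ to contain two distinct points is a correct sharpening of the paper's reading of $\lambda^{m/2-s}=\lambda^{-1}$ ``as a function of $\lambda>0$'' rather than only on the spectrum.
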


\begin{remark}[Fractional ambient mediator]
The operator $L_M^{-5/2}$ is not the ordinary Green operator $L_M^{-1}$ of a six-dimensional Laplacian.  It is a fractional Riesz-type mediator.  The result above says that this particular fractional ambient mediator induces a brane Green operator proportional to $L_B^{-1}$.  The inverse-distance behavior that appears in flat three-dimensional examples comes from the induced brane Green kernel, not from assuming an ordinary six-dimensional Green kernel.
\end{remark}

\section{Dimensional reduction, canonical restriction, and the critical Riesz exponent}
\label{sec:dimensional-reduction}

The codimension-three restriction formula above is a special case of a
general reduction identity for product geometries.  In this section the
word ``reduction'' has a precise spectral meaning: the transverse continuum
of momenta is integrated out, producing an effective spectral multiplier on
the brane Hilbert space.  No additional geometric or dynamical structure is
used in the reduction.

We first record the reduction in a form that does not rely on informal
point restriction in the transverse \(L^2\)-factor.  Let \(m\geq1\), and
let
\[
    H_M:=H_B\otimes L^2(\R^m_y),
    \qquad
    L_M:=L_B\otimes I+I\otimes(-\Delta_y).
\]
Here \(y\in\R^m\) denotes the transverse variable.  Let
\(\eta\in C_c^\infty(\R^m)\) satisfy
\[
    \int_{\R^m}\eta(y)\,\dd y=1,
\]
and set
\[
    \eta_\varepsilon(y):=\varepsilon^{-m}\eta(y/\varepsilon),
    \qquad \varepsilon>0.
\]
For \(J\in H_B\), define the transversely smeared brane source
\[
    R_\varepsilon J:=J\otimes\eta_\varepsilon\in H_M.
\]
The family \(R_\varepsilon J\) represents a brane-supported source only in
the transverse distributional sense; no convergence in \(H_M\) is asserted.

\begin{proposition}[Canonical transverse restriction]
\label{prop:canonical-transverse-restriction}
Assume \(L_B\geq \ell_B I\) with \(\ell_B>0\).  Let \(s>m/2\).  For every
\(J,K\in H_B\),
\begin{equation}
    \lim_{\varepsilon\to0^+}
    \big\langle R_\varepsilon J,
    L_M^{-s}R_\varepsilon K\big\rangle_{H_M}
    =
    \big\langle J,T_{m,s}(L_B)K\big\rangle_{H_B},
    \label{eq:canonical-restriction-limit}
\end{equation}
where
\begin{equation}
    T_{m,s}(L_B)
    :=
    \int_{\R^m}
    \frac{\dd^m q}{(2\pi)^m}
    (L_B+|q|^2)^{-s}.
    \label{eq:Tms-definition}
\end{equation}
The limit is independent of the choice of normalized mollifier
\(\eta\).
\end{proposition}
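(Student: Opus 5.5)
The plan is to diagonalize the transverse factor by the unitary Fourier transform \(\mathcal F\) on \(L^2(\R^m_y)\). Under \(\Id\otimes\mathcal F\), the operator \(L_M\) becomes the direct integral of the operators \(L_B+|q|^2\) acting on \(H_B\), with fiber variable \(q\in\R^m\). The functional calculus therefore gives \(L_M^{-s}\) as the direct integral of the bounded operators \((L_B+|q|^2)^{-s}\). Since \(\eta_\varepsilon\in C_c^\infty\subset L^2\), we have \(R_\varepsilon J\in H_M\). Its Fourier image is \(J\otimes\widehat{\eta_\varepsilon}\), where \(\widehat{\eta_\varepsilon}(q)=\hat\eta(\varepsilon q)\) and \(\hat\eta(q)=\int e^{-iq\cdot y}\eta(y)\,\dd y\). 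With the Plancherel normalization \(\dd^mq/(2\pi)^m\), this yields the exact identity
\[
    \big\langle R_\varepsilon J,L_M^{-s}R_\varepsilon K\big\rangle_{H_M}
    =\int_{\R^m}\frac{\dd^mq}{(2\pi)^m}\,
    |\hat\eta(\varepsilon q)|^2\,
    \big\langle J,(L_B+|q|^2)^{-s}K\big\rangle_{H_B}.
\]
Justifying this fiberwise formula is the only structural step. I would do it on the dense set of finite sums of elementary tensors, checking it against the spectral resolution of \(L_B\) tensored with multiplication by \(|q|^2\). Here, however, we only need it for a single elementary tensor, so the joint spectral measure of the two commuting self-adjoint operators \(L_B\otimes\Id\) and \(\Id\otimes(-\Delta_y)\) gives it directly.

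Next we pass to the limit \(\varepsilon\to0^+\) by dominated convergence. Pointwise, \(\hat\eta(\varepsilon q)\to\hat\eta(0)=\int\eta\,\dd y=1\) for every \(q\). This is exactly where the normalization of \(\eta\) enters, and it is the reason the limit does not depend on the choice of \(\eta\). For the domination we use \(|\hat\eta(\varepsilon q)|\le\|\eta\|_{L^1}\) together with
\[
    \big|\langle J,(L_B+|q|^2)^{-s}K\rangle\big|
    \le(\ell_B+|q|^2)^{-s}\|J\|\|K\|.
\]
The right-hand side lies in \(L^1(\R^m)\) precisely because \(s>m/2\). Dominated convergence then shows that the limit equals
\[
    \int_{\R^m}\frac{\dd^mq}{(2\pi)^m}\,
    \big\langle J,(L_B+|q|^2)^{-s}K\big\rangle.
\]

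Finally, we identify this limit with \(\langle J,T_{m,s}(L_B)K\rangle\). The operator-valued integral \eqref{eq:Tms-definition} converges as an operator-norm Bochner integral, by the same bound \(\|(L_B+|q|^2)^{-s}\|\le(\ell_B+|q|^2)^{-s}\) used after \eqref{eq:brane-restriction-operator-integral}. Bounded linear functionals \(A\mapsto\langle J,AK\rangle\) commute with Bochner integrals, so the identification follows. By Lemma~\ref{lem:transverse-integral} and the spectral argument of Theorem~\ref{thm:brane-restriction}, \(T_{m,s}(L_B)\) can then also be written as \(C_{m,s}L_B^{m/2-s}\), although the statement does not require this.

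The main obstacle is the rigorous fiber decomposition, that is, the first displayed identity. Once it is available, the rest is a one-line dominated convergence argument. I would handle it by noting that the function \(\phi(\lambda,\mu)=(\lambda+\mu)^{-s}\) is bounded and Borel on \([\ell_B,\infty)\times[0,\infty)\). The joint spectral measure of the elementary tensor \(J\otimes\eta_\varepsilon\) factors as the product of the spectral measure of \(J\) for \(L_B\) and the pushforward of \(|\hat\eta_\varepsilon|^2\,\dd^mq/(2\pi)^m\) under \(q\mapsto|q|^2\). The displayed formula then follows from Fubini.
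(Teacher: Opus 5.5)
Your proposal is correct and is essentially the paper's proof: you write the transverse Fourier/direct-integral formula with weight $|\hat\eta(\varepsilon q)|^2$, then apply dominated convergence with the integrable majorant $(\ell_B+|q|^2)^{-s}$ (for $s>m/2$) and the normalization $\hat\eta(0)=1$. Two of your steps are ones the paper leaves implicit: the justification of the fiberwise identity through the product spectral measure (when $J\neq K$, use the complex measure $\langle J,E_B(\cdot)K\rangle$ rather than ``the spectral measure of $J$''), and the explicit identification of the limit with $\langle J,T_{m,s}(L_B)K\rangle$ via the Bochner integral.
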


\begin{proof}
Use the transverse Fourier transform normalized so that Plancherel gives
the measure \((2\pi)^{-m}\dd^m q\), and so that \(-\Delta_y\) corresponds
to multiplication by \(|q|^2\).  Since
\[
    \widehat{\eta_\varepsilon}(q)=\widehat{\eta}(\varepsilon q),
\]
the direct-integral representation of \(L_M^{-s}\) gives
\begin{align}
    \big\langle R_\varepsilon J,
    L_M^{-s}R_\varepsilon K\big\rangle_{H_M}
    &=
    \int_{\R^m}
    \frac{\dd^m q}{(2\pi)^m}
    \left|\widehat{\eta}(\varepsilon q)\right|^2
    \big\langle J,(L_B+|q|^2)^{-s}K\big\rangle_{H_B}.
    \label{eq:mollified-restriction-integral}
\end{align}
Because \(\eta\in C_c^\infty(\R^m)\), its Fourier transform is bounded.
Moreover,
\[
    \left|
    \big\langle J,(L_B+|q|^2)^{-s}K\big\rangle_{H_B}
    \right|
    \leq
    \|J\|_{H_B}\|K\|_{H_B}(\ell_B+|q|^2)^{-s}.
\]
The majorant \((\ell_B+|q|^2)^{-s}\) is integrable over \(\R^m\) exactly
when \(s>m/2\).  Finally,
\[
    \widehat{\eta}(0)=\int_{\R^m}\eta(y)\,\dd y=1.
\]
Dominated convergence applied to
\eqref{eq:mollified-restriction-integral} proves
\eqref{eq:canonical-restriction-limit}.  Since only the normalization
\(\widehat{\eta}(0)=1\) enters the limiting value, the limit is independent
of the particular mollifier.
\end{proof}

The operator \(T_{m,s}(L_B)\) admits an equivalent heat-kernel form.  This
form isolates the scaling responsible for the exponent \(5/2\) in
codimension three.

\begin{proposition}[Transverse reduction in heat-kernel form]
\label{prop:Tms-heat-form}
Assume \(L_B\geq \ell_B I\) with \(\ell_B>0\).  Let \(m\geq1\) and
\(s>m/2\).  Then \(T_{m,s}(L_B)\) exists as an operator-norm Bochner
integral and satisfies
\begin{equation}
    T_{m,s}(L_B)
    =
    \frac{1}{(4\pi)^{m/2}\Gamma(s)}
    \int_0^\infty
    t^{s-1-m/2}e^{-tL_B}\,\dd t .
    \label{eq:Tms-heat-form}
\end{equation}
Equivalently,
\begin{equation}
    T_{m,s}(L_B)
    =
    \frac{1}{(4\pi)^{m/2}}
    \frac{\Gamma(s-m/2)}{\Gamma(s)}
    L_B^{m/2-s}.
    \label{eq:Tms-spectral-form}
\end{equation}
\end{proposition}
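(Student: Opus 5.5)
The plan is to prove the three assertions in turn: existence of the Bochner integral, the heat-kernel formula \eqref{eq:Tms-heat-form}, and the spectral formula \eqref{eq:Tms-spectral-form}. All three reduce to scalar computations on the spectrum of \(L_B\), using functional calculus together with Lemma~\ref{lem:transverse-integral}.

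\textbf{Existence.} The map \(q\mapsto (L_B+|q|^2)^{-s}\) is norm-continuous on \(\R^m\). This holds because \(\lambda\mapsto(\lambda+|q|^2)^{-s}\) is uniformly Lipschitz in \(|q|^2\) on \([\ell_B,\infty)\), so functional calculus transfers continuity to operator norm. The same bound used for \eqref{eq:brane-restriction-operator-integral} gives \(\|(L_B+|q|^2)^{-s}\|\le(\ell_B+|q|^2)^{-s}\). This majorant is integrable over \(\R^m\) exactly when \(s>m/2\), so \(T_{m,s}(L_B)\) is an operator-norm Bochner integral.

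\textbf{Spectral form \eqref{eq:Tms-spectral-form}.} I will evaluate \(T_{m,s}(L_B)\) weakly. Fix \(f\in H_B\) and let \(\mu_f\) be its spectral measure, supported in \([\ell_B,\infty)\). Then
\[
\braket{f}{T_{m,s}(L_B)f}=\int_{\R^m}\frac{\dd^mq}{(2\pi)^m}\int(\lambda+|q|^2)^{-s}\dd\mu_f(\lambda).
\]
The integrand is nonnegative, so Tonelli's theorem lets us exchange the two integrals. Lemma~\ref{lem:transverse-integral} then evaluates the inner \(q\)-integral as \(C_{m,s}\lambda^{m/2-s}\), where \(C_{m,s}:=(4\pi)^{-m/2}\Gamma(s-m/2)/\Gamma(s)\). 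This gives \(\braket{f}{T_{m,s}f}=C_{m,s}\braket{f}{L_B^{m/2-s}f}\). Since \(m/2-s<0\) and \(L_B\ge\ell_B\), the operator \(L_B^{m/2-s}\) is bounded. Polarization upgrades the equality of quadratic forms to equality of operators.

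\textbf{Heat-kernel form \eqref{eq:Tms-heat-form}.} I will treat \(t\mapsto t^{s-1-m/2}e^{-tL_B}\) as a Bochner integral on \((0,\infty)\), which needs norm integrability at both endpoints.
\begin{itemize}[label={}]
\item Near \(t=0\), the bound \(\|e^{-tL_B}\|\le1\) suffices, since \(s-1-m/2>-1\) makes \(t^{s-1-m/2}\) integrable at \(0\).
\item Near \(t=\infty\), the bound \(\|e^{-tL_B}\|\le e^{-t\ell_B}\) gives exponential decay.
\end{itemize}
The semigroup \(e^{-tL_B}\) is only strongly continuous at \(t=0\), not norm continuous. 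To avoid this measurability issue I would again argue weakly, or treat the integral as a strong Bochner integral applied to each vector. The same Tonelli argument against \(\mu_f\), combined with the scalar identity
\[
\int_0^\infty t^{s-1-m/2}e^{-t\lambda}\dd t=\Gamma(s-m/2)\lambda^{m/2-s},
\]
shows that the right-hand side of \eqref{eq:Tms-heat-form} also equals \(C_{m,s}L_B^{m/2-s}\). This proves both forms.

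The main obstacle is only this measurability point at \(t=0\), which the weak or strong formulation handles. Everything else is a direct transfer of Lemma~\ref{lem:transverse-integral} through the spectral theorem.
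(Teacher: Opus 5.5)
Your proof is correct, but it is organized differently from the paper's.

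\textbf{The paper's route.} It derives the heat-kernel formula directly from the momentum integral. For each $q$ it inserts the operator Schwinger representation $(L_B+|q|^2)^{-s}=\Gamma(s)^{-1}\int_0^\infty t^{s-1}e^{-t(L_B+|q|^2)}\,\dd t$. It then interchanges the $q$- and $t$-integrals by an operator-norm Fubini argument, with majorant $t^{s-1}e^{-t(\ell_B+|q|^2)}$. Next it evaluates $\int_{\R^m}e^{-t|q|^2}\,\dd^mq/(2\pi)^m=(4\pi t)^{-m/2}$. Only then does it apply functional calculus to the resulting $t$-integral to obtain the spectral multiplier.

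\textbf{Your route.} You never relate the two integrals to each other directly. You test each against a spectral measure $\mu_f$ and use Tonelli on nonnegative scalar integrands. This shows that each integral separately equals $C_{m,s}L_B^{m/2-s}$: via Lemma~\ref{lem:transverse-integral} for the $q$-integral and the Gamma integral for the $t$-integral. Polarization then finishes the argument.

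\textbf{What each buys.} Your version is more careful on points the paper passes over quickly: norm measurability of $q\mapsto(L_B+|q|^2)^{-s}$, and justification of the interchange. Its cost is that it only verifies that both expressions are the same function of $L_B$. The paper's derivation instead exhibits the mechanism, namely $t^{s-1}$ combining with the transverse heat kernel $t^{-m/2}$. The subsequent remark explaining why $s=5/2$ appears relies on that mechanism.

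\textbf{A correction to your commentary.} The ``measurability obstacle'' at $t=0$ is not a real one. For $t,t'>0$,
\[
\norm{e^{-tL_B}-e^{-t'L_B}}\le\sup_{\lambda\ge\ell_B}\bigl|e^{-t\lambda}-e^{-t'\lambda}\bigr|\le\frac{|t-t'|}{e\min\{t,t'\}},
\]
so $t\mapsto t^{s-1-m/2}e^{-tL_B}$ is norm-continuous on the open interval $(0,\infty)$. Combined with your bound $t^{s-1-m/2}e^{-t\ell_B}$, this makes the $t$-integral a genuine operator-norm Bochner integral, exactly as the paper asserts. The failure of norm continuity at the endpoint is irrelevant. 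Your weak or strong formulation is still valid, since it agrees with the norm integral, so this does not affect correctness.
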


\begin{proof}
Since \(L_B\geq \ell_B I\),
\[
    \left\|(L_B+|q|^2)^{-s}\right\|
    \leq
    (\ell_B+|q|^2)^{-s}.
\]
The right-hand side is integrable over \(\R^m\) precisely when \(s>m/2\).
Thus \eqref{eq:Tms-definition} is an operator-norm Bochner integral.

For each fixed \(q\), the Schwinger representation for positive
self-adjoint operators gives
\[
    (L_B+|q|^2)^{-s}
    =
    \frac{1}{\Gamma(s)}
    \int_0^\infty
    t^{s-1}e^{-t(L_B+|q|^2)}\,\dd t .
\]
The preceding norm estimate justifies Fubini in operator norm.  Therefore
\begin{align}
    T_{m,s}(L_B)
    &=
    \frac{1}{\Gamma(s)}
    \int_0^\infty
    t^{s-1}e^{-tL_B}
    \left(
        \int_{\R^m}
        \frac{\dd^m q}{(2\pi)^m}
        e^{-t|q|^2}
    \right)
    \dd t                                      \notag\\
    &=
    \frac{1}{(4\pi)^{m/2}\Gamma(s)}
    \int_0^\infty
    t^{s-1-m/2}e^{-tL_B}\,\dd t .
\end{align}
This proves \eqref{eq:Tms-heat-form}.  The same integral is convergent in
operator norm because
\[
    \left\|t^{s-1-m/2}e^{-tL_B}\right\|
    \leq
    t^{s-1-m/2}e^{-t\ell_B},
\]
which is integrable at \(t=0\) when \(s>m/2\), and integrable at infinity
because \(\ell_B>0\).

Applying the spectral theorem to \eqref{eq:Tms-heat-form}, for each
spectral value \(\lambda\geq\ell_B\) one obtains
\[
    \frac{1}{(4\pi)^{m/2}\Gamma(s)}
    \int_0^\infty
    t^{s-1-m/2}e^{-t\lambda}\,\dd t
    =
    \frac{1}{(4\pi)^{m/2}}
    \frac{\Gamma(s-m/2)}{\Gamma(s)}
    \lambda^{m/2-s}.
\]
This proves \eqref{eq:Tms-spectral-form}.
\end{proof}

\begin{proposition}[Critical Riesz exponent for an induced brane Green operator]
\label{prop:critical-riesz-exponent}
For the reduced multiplier in \eqref{eq:Tms-spectral-form}, the Green
power \(L_B^{-1}\) is obtained exactly at
\begin{equation}
    s=s_\ast(m):=1+\frac m2 .
    \label{eq:critical-riesz-exponent}
\end{equation}
At this exponent,
\begin{equation}
    T_{m,s_\ast}(L_B)
    =
    \frac{1}{(4\pi)^{m/2}\Gamma(1+m/2)}
    L_B^{-1}.
    \label{eq:T-critical-general}
\end{equation}
In codimension three,
\begin{equation}
    s_\ast(3)=\frac52,
    \qquad
    T_{3,5/2}(L_B)
    =
    \frac{1}{6\pi^2}L_B^{-1}.
    \label{eq:T-critical-three}
\end{equation}
\end{proposition}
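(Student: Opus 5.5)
The plan is to read everything off the spectral form \eqref{eq:Tms-spectral-form} of Proposition~\ref{prop:Tms-heat-form}. That result gives
\[
    T_{m,s}(L_B)=\frac{1}{(4\pi)^{m/2}}\frac{\Gamma(s-m/2)}{\Gamma(s)}\,L_B^{m/2-s}
\]
for every \(s>m/2\). The "exactly at" claim needs both directions.

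\textbf{Sufficiency.} If \(s=1+m/2\), the exponent \(m/2-s\) equals \(-1\).

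\textbf{Necessity.} Suppose \(T_{m,s}(L_B)=c\,L_B^{-1}\) for some constant \(c\). The prefactor \(\Gamma(s-m/2)/\Gamma(s)\) is strictly positive for \(s>m/2\), so \(c>0\). Apply both sides to an eigenvector \(u_j\), or more generally use the spectral measure. This gives \(c'\lambda^{m/2-s}=c\lambda^{-1}\) for all \(\lambda\) in the spectrum, where \(c'>0\) is the prefactor.

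If the spectrum contains two distinct points \(\lambda\neq\lambda'\), taking ratios gives \((\lambda/\lambda')^{m/2-s+1}=1\), which forces \(s=1+m/2\). In the compact-resolvent setting with infinitely many eigenvalues, two distinct points are automatic. In general I would state the uniqueness as uniqueness of the power law \(\lambda\mapsto\lambda^{m/2-s}\) as a function on \((0,\infty)\), or equivalently for all admissible \(L_B\). This uniqueness direction is the only delicate point: it fails for the degenerate case \(L_B=\lambda_0\Id\), so I would phrase it in the function-level sense.

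\textbf{Constant.} At \(s_\ast=1+m/2\) we have \(\Gamma(s_\ast-m/2)=\Gamma(1)=1\). Hence
\[
    T_{m,s_\ast}(L_B)=\frac{1}{(4\pi)^{m/2}\Gamma(1+m/2)}\,L_B^{-1},
\]
which is \eqref{eq:T-critical-general}.

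\textbf{Codimension three.} For \(m=3\) we get \(s_\ast=5/2\), and \(\Gamma(5/2)=3\sqrt\pi/4\). With \((4\pi)^{3/2}=8\pi^{3/2}\), the product is
\[
    (4\pi)^{3/2}\,\Gamma(5/2)=8\pi^{3/2}\cdot\frac{3\sqrt\pi}{4}=6\pi^2 .
\]
This yields \eqref{eq:T-critical-three} and agrees with \eqref{eq:transverse-integral-3-5half} of Lemma~\ref{lem:transverse-integral} and with Theorem~\ref{thm:brane-restriction}.
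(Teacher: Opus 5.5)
Your proposal is correct and follows essentially the same route as the paper. The paper also reads off $C_{m,s}L_B^{m/2-s}$ from Proposition~\ref{prop:Tms-heat-form} and imposes $\lambda^{m/2-s}=\lambda^{-1}$ as functions of $\lambda>0$, which is exactly your function-level phrasing. It then evaluates $\Gamma(1)/\Gamma(1+m/2)$ and uses $\Gamma(5/2)=3\sqrt{\pi}/4$ to get $1/(6\pi^2)$, as you do. Your remark about the degenerate case $L_B=\lambda_0\Id$ is a useful clarification of why the uniqueness is stated at the function level.
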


\begin{proof}
By Proposition~\ref{prop:Tms-heat-form},
\[
    T_{m,s}(L_B)
    =
    C_{m,s}L_B^{m/2-s},
    \qquad
    C_{m,s}
    =
    \frac{1}{(4\pi)^{m/2}}
    \frac{\Gamma(s-m/2)}{\Gamma(s)} .
\]
The reduced spectral multiplier has the Green power \(\lambda^{-1}\)
precisely when
\[
    \lambda^{m/2-s}=\lambda^{-1}
\]
as a function of \(\lambda>0\).  Hence
\[
    m/2-s=-1,
    \qquad\text{or equivalently}\qquad
    s=1+\frac m2 .
\]
Substitution gives
\[
    C_{m,s_\ast}
    =
    \frac{1}{(4\pi)^{m/2}}
    \frac{\Gamma(1)}{\Gamma(1+m/2)}
    =
    \frac{1}{(4\pi)^{m/2}\Gamma(1+m/2)}.
\]
This proves \eqref{eq:T-critical-general}.  For \(m=3\),
\(\Gamma(5/2)=3\sqrt{\pi}/4\), so
\[
    \frac{1}{(4\pi)^{3/2}\Gamma(5/2)}
    =
    \frac{1}{6\pi^2}.
\]
This proves \eqref{eq:T-critical-three}.
\end{proof}

\begin{remark}[Why the exponent \(5/2\) appears]
The heat-kernel representation \eqref{eq:Tms-heat-form} makes the exponent
transparent.  The Schwinger parameter for the Riesz power contributes
\(t^{s-1}\), while the \(m\)-dimensional transverse heat kernel contributes
\(t^{-m/2}\).  At
\[
    s=1+\frac m2,
\]
these powers combine to \(t^0\).  Therefore
\[
    T_{m,s_\ast}(L_B)
    =
    \frac{1}{(4\pi)^{m/2}\Gamma(1+m/2)}
    \int_0^\infty e^{-tL_B}\,\dd t
    =
    \frac{1}{(4\pi)^{m/2}\Gamma(1+m/2)}
    L_B^{-1}.
\]
Thus \(s=5/2\) is the codimension-three value selected by transverse
heat-kernel scaling.  The word ``critical'' is used only in this spectral
scaling sense.
\end{remark}

There is also an integer-order auxiliary representation of the fractional
operator \(L_M^{-5/2}\).  

We also record a related integer-order auxiliary identity: the fractional
operator \(L_M^{-5/2}\) can be represented, up to an explicit constant, as a
hypersurface reduction of a cubic resolvent in one additional auxiliary
direction.

\begin{proposition}[Auxiliary integer-order lift]
\label{prop:auxiliary-integer-lift}
Let \(A\geq \ell I\), \(\ell>0\), be a positive self-adjoint operator on a
Hilbert space \(H\).  Define
\[
    \widetilde A:=A\otimes I+I\otimes(-\partial_z^2)
\]
on \(H\otimes L^2(\R_z)\).  Let \(\rho\in C_c^\infty(\R)\) satisfy
\[
    \int_{\R}\rho(z)\,\dd z=1,
\]
and set
\[
    \rho_\delta(z):=\delta^{-1}\rho(z/\delta),
    \qquad \delta>0.
\]
For \(F\in H\), define
\[
    S_\delta F:=F\otimes\rho_\delta\in H\otimes L^2(\R_z).
\]
Then, for all \(F,G\in H\),
\begin{equation}
    \lim_{\delta\to0^+}
    \big\langle S_\delta F,\widetilde A^{-3}S_\delta G
    \big\rangle_{H\otimes L^2(\R)}
    =
    \frac{3}{16}
    \big\langle F,A^{-5/2}G\big\rangle_H .
    \label{eq:auxiliary-lift-limit}
\end{equation}
Equivalently, in canonical hypersurface-restriction notation,
\begin{equation}
    R_z\widetilde A^{-3}R_z^\ast
    =
    \frac{3}{16}A^{-5/2}.
    \label{eq:auxiliary-lift-formal}
\end{equation}
\end{proposition}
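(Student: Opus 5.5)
This is the case \(m=1\), \(s=3\) of Proposition~\ref{prop:canonical-transverse-restriction}, with \(H_B\) replaced by \(H\) and \(L_B\) by \(A\). I would carry out that reduction explicitly and then evaluate the resulting one-dimensional momentum integral with Lemma~\ref{lem:transverse-integral}.

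\emph{Step 1 (Fourier transform in \(z\)).} Use the Fourier transform in \(z\), normalized so that Plancherel carries the measure \(\dd p/(2\pi)\) and \(-\partial_z^2\) becomes multiplication by \(p^2\). Under this transform, \(\widetilde A^{-3}\) is the direct integral of the bounded operators \((A+p^2)^{-3}\). Since \(\widehat{\rho_\delta}(p)=\widehat\rho(\delta p)\), we obtain
\[
\big\langle S_\delta F,\widetilde A^{-3}S_\delta G\big\rangle
=\int_{\R}\frac{\dd p}{2\pi}\,|\widehat\rho(\delta p)|^2\,\big\langle F,(A+p^2)^{-3}G\big\rangle_H .
\]

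\emph{Step 2 (limit \(\delta\to0^+\)).} The integrand is dominated by \(\|\widehat\rho\|_\infty^2\|F\|\|G\|(\ell+p^2)^{-3}\), which is integrable over \(\R\) because \(3>1/2\). Since \(\widehat\rho(0)=\int\rho=1\), dominated convergence gives the limit
\[
\Big\langle F,\int_{\R}\frac{\dd p}{2\pi}(A+p^2)^{-3}\,G\Big\rangle .
\]
Here the integral is an operator-norm Bochner integral, exactly as in Proposition~\ref{prop:Tms-heat-form}.

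\emph{Step 3 (evaluating the constant).} By Proposition~\ref{prop:Tms-heat-form}, equation \eqref{eq:Tms-spectral-form}, with \(m=1\) and \(s=3\), this operator equals
\[
\frac{1}{(4\pi)^{1/2}}\frac{\Gamma(5/2)}{\Gamma(3)}A^{-5/2}.
\]
Since \(\Gamma(5/2)=3\sqrt\pi/4\) and \(\Gamma(3)=2\), the constant is
\[
\frac{1}{2\sqrt\pi}\cdot\frac{3\sqrt\pi}{8}=\frac{3}{16},
\]
which is \eqref{eq:auxiliary-lift-limit}.

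The formal identity \eqref{eq:auxiliary-lift-formal} is then a restatement: by definition, \(R_z\widetilde A^{-3}R_z^\ast\) means the bounded operator whose form is this limit, and that limit does not depend on \(\rho\).

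There is no real obstacle. The only point needing care is the justification in Step 1: the direct-integral representation and the convention \((2\pi)^{-1}\dd p\) must be matched to the Plancherel normalization, as in the proof of Proposition~\ref{prop:canonical-transverse-restriction}. The arithmetic of the constant is routine.
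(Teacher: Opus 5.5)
Your proposal is correct and follows the paper's proof: both treat the statement as the case $m=1$, $s=3$ of Proposition~\ref{prop:canonical-transverse-restriction} with $L_B$ replaced by $A$, and then evaluate $\int_{\R}\frac{\dd p}{2\pi}(A+p^2)^{-3}=\frac{1}{(4\pi)^{1/2}}\frac{\Gamma(5/2)}{\Gamma(3)}A^{-5/2}=\frac{3}{16}A^{-5/2}$. Your explicit dominated-convergence bound and the Bochner-integral remark spell out details that the paper leaves to the cited proposition and to the functional calculus.
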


\begin{proof}
The proof is the one-dimensional instance of
Proposition~\ref{prop:canonical-transverse-restriction}, with \(m=1\),
\(s=3\), and \(L_B\) replaced by \(A\).  Explicitly,
\[
    \lim_{\delta\to0^+}
    \big\langle S_\delta F,\widetilde A^{-3}S_\delta G
    \big\rangle
    =
    \int_{\R}\frac{\dd p}{2\pi}
    \big\langle F,(A+p^2)^{-3}G\big\rangle_H .
\]
By the functional calculus,
\[
    \int_{\R}\frac{\dd p}{2\pi}(A+p^2)^{-3}
    =
    \frac{1}{(4\pi)^{1/2}}
    \frac{\Gamma(3-1/2)}{\Gamma(3)}
    A^{1/2-3}.
\]
Since
\[
    \frac{1}{(4\pi)^{1/2}}
    \frac{\Gamma(5/2)}{\Gamma(3)}
    =
    \frac{1}{2\sqrt{\pi}}\cdot
    \frac{3\sqrt{\pi}/4}{2}
    =
    \frac{3}{16},
\]
we obtain
\[
    \int_{\R}\frac{\dd p}{2\pi}(A+p^2)^{-3}
    =
    \frac{3}{16}A^{-5/2}.
\]
This proves \eqref{eq:auxiliary-lift-limit}.
\end{proof}

This auxiliary lift is not used in the proof of the stochastic trace
identity.  Its role is only to record that the fractional inverse
\(L_M^{-5/2}\) may be obtained, with an explicit constant, as a hypersurface
reduction of an integer-power resolvent in one additional auxiliary
direction.

Applying Proposition~\ref{prop:auxiliary-integer-lift} with \(A=L_M\)
gives
\begin{equation}
    R_z(L_M-\partial_z^2)^{-3}R_z^\ast
    =
    \frac{3}{16}L_M^{-5/2},
    \label{eq:LM-fivehalf-from-cubic}
\end{equation}
again in the canonical mollified sense.  Thus the codimension-three Riesz
mediator \(L_M^{-5/2}\) can be obtained as the hypersurface reduction of an
integer-power cubic resolvent in one auxiliary direction.

Combining the auxiliary \(z\)-reduction with the transverse
codimension-three reduction gives a single four-dimensional transverse
calculation:
\begin{equation}
    \int_{\R^3}\frac{\dd^3 q}{(2\pi)^3}
    \int_{\R}\frac{\dd p}{2\pi}
    (L_B+|q|^2+p^2)^{-3}
    =
    \frac{1}{32\pi^2}L_B^{-1}.
    \label{eq:combined-four-dimensional-reduction}
\end{equation}
Equivalently,
\begin{equation}
    \frac{16}{3}
    \left(
    \int_{\R}\frac{\dd p}{2\pi}
    (L_M+p^2)^{-3}
    \right)
    =
    L_M^{-5/2},
    \qquad
    \int_{\R^3}\frac{\dd^3 q}{(2\pi)^3}
    L_M^{-5/2}
    =
    \frac{1}{6\pi^2}L_B^{-1},
    \label{eq:two-step-reduction-chain}
\end{equation}
with the product of constants
\[
    \frac{3}{16}\cdot\frac{1}{6\pi^2}
    =
    \frac{1}{32\pi^2}.
\]

\begin{remark}[Integer-order quadratic form]
When \(L_M\) is a local second-order elliptic operator, the auxiliary
operator
\[
    \widetilde L:=L_M-\partial_z^2
\]
is again second order on the product with the auxiliary line, and
\(\widetilde L^3\) is an integer-order sixth-order differential operator.
At finite spectral and transverse-momentum cutoffs, the quadratic form
\[
    Q_{\mathrm{aux}}[\psi]
    =
    \big\langle \psi,\widetilde L^3\psi\big\rangle
\]
has covariance \(\widetilde L^{-3}\).  The canonical hypersurface
restriction in Proposition~\ref{prop:auxiliary-integer-lift} then produces
the fractional covariance factor \(L_M^{-5/2}\), with the explicit constant
\(3/16\).  This is the sense in which the fractional mediator used above is
related to an integer-order auxiliary resolvent.
\end{remark}

\begin{remark}[Role in the present construction]
The results of this section give three precise facts used by the later
stochastic trace representation.  First, brane restriction of
\(L_M^{-s}\) is defined as a mollifier-independent transverse distributional
limit when \(s>m/2\).  Second, after this restriction, the reduced brane
operator is the spectral multiplier \(T_{m,s}(L_B)\).  Third, the value
\(s=1+m/2\) is exactly the value for which this multiplier is proportional
to the ordinary Green operator \(L_B^{-1}\).  For the codimension-three
case used in the paper, this gives \(s=5/2\) and
\[
    T_{3,5/2}(L_B)=\frac{1}{6\pi^2}L_B^{-1}.
\]
The stochastic source construction below takes this induced brane Green
operator as its quadratic kernel.
\end{remark}

\section{Gaussian generalized sources and a prescribed-covariance trace representation}
\label{sec:stochastic}

The second ingredient is a Gaussian generalized scalar source whose covariance is chosen to compensate the inverse power of $L_B$ in the brane mediator.  This is the point at which the construction becomes a representation theorem for a spectral trace.  The source is defined as a generalized Gaussian field in the Hilbert scale of $L_B$; no ordinary $H_B$-valued source is assumed before heat regularization.

\subsection{White noise, negative Sobolev scale, and fractional differentiation}

For $r\in\R$, let $\mathcal H_B^r$ denote the Hilbert scale generated by $L_B$; in the compact-resolvent case,
\[
    \norm{v}_{\mathcal H_B^r}^2
    =\sum_{j\geq1}\lambda_j^r\,|v_j|^2,
    \qquad
    v_j=\braket{u_j}{v}.
\]
Thus $\mathcal H_B^r=\Dom(L_B^{r/2})$ for $r\geq0$, while $\mathcal H_B^{-r}$ is the corresponding negative Sobolev-type completion.  Let $\xi$ denote real or complex Gaussian white noise on $H_B$, formally written as
\begin{equation}
    \xi=\sum_{j\geq1}\xi_j u_j,
    \qquad
    \E[\xi_i\overline{\xi_j}]=\delta_{ij}.
    \label{eq:white-noise-expansion}
\end{equation}
This series does not converge in $H_B$ in general.  It is first of all a cylindrical Gaussian random distribution, or equivalently an isonormal Gaussian process on $H_B$.  If $\Tr(L_B^{-r})<\infty$, then the same series converges in $L^2(\Omega;\mathcal H_B^{-r})$ and almost surely in $\mathcal H_B^{-r}$; for a standard compact elliptic operator on a three-dimensional brane this holds for every $r>3/2$.

Let $g>0$ be the brane normalization constant in \eqref{eq:VB-gLBminus1}.  The unregularized expression
\[
    \sigma:=\left(\frac{\hbar c}{g}\right)^{1/2}L_B^{3/4}\xi
\]
is not an $H_B$-valued random variable.  It is a generalized Gaussian field, defined on test vectors $\varphi\in\mathcal H_B^{3/2}=\Dom(L_B^{3/4})$ by
\[
    \sigma(\varphi)
    :=\left(\frac{\hbar c}{g}\right)^{1/2}\xi(L_B^{3/4}\varphi).
\]
Its covariance is the quadratic form
\[
    \E[\sigma(\varphi)\overline{\sigma(\psi)}]
    =\frac{\hbar c}{g}\braket{L_B^{3/4}\varphi}{L_B^{3/4}\psi},
    \qquad \varphi,\psi\in\mathcal H_B^{3/2}.
\]
Equivalently, its formal covariance is $(\hbar c/g)L_B^{3/2}$.  Under stronger trace assumptions, for example $\Tr(L_B^{3/2-r})<\infty$, this generalized field can be realized as an $\mathcal H_B^{-r}$-valued random variable.  In the usual compact elliptic three-dimensional case this is true for every $r>3$.

For $\tau>0$, define the heat-regularized source
\begin{equation}
    \sigma_\tau
    :=\left(\frac{\hbar c}{g}\right)^{1/2}L_B^{3/4}e^{-\tau L_B/2}\xi.
    \label{eq:sigma-tau}
\end{equation}
This is an ordinary $H_B$-valued Gaussian random variable.  Indeed,
\[
    \E\norm{\sigma_\tau}_{H_B}^2
    =\frac{\hbar c}{g}\Tr(L_B^{3/2}e^{-\tau L_B})<\infty,
\]
because $x^{3/2}e^{-\tau x}\leq C_\tau e^{-\tau x/2}$ and $\Tr(e^{-\tau L_B/2})<\infty$.  In components,
\begin{equation}
    \sigma_{\tau,j}:=\braket{u_j}{\sigma_\tau}
    =\left(\frac{\hbar c}{g}\right)^{1/2}\lambda_j^{3/4}e^{-\tau\lambda_j/2}\xi_j.
    \label{eq:sigma-components}
\end{equation}
Therefore
\begin{equation}
    \E[\sigma_\tau\otimes\sigma_\tau^*]
    =\frac{\hbar c}{g}L_B^{3/2}e^{-\tau L_B},
    \label{eq:sigma-covariance}
\end{equation}
where the covariance operator on the right is trace class for every $\tau>0$.

\begin{remark}[Meaning of the exponent \(3/4\)]
The exponent \(3/4\) is fixed by the prescribed source covariance in
\eqref{eq:sigma-covariance}.  Pairing the covariance
\[
    \frac{\hbar c}{g}L_B^{3/2}e^{-\tau L_B}
\]
with the brane Green kernel \(gL_B^{-1}\) gives the multiplier
\[
    \hbar c\,L_B^{1/2}e^{-\tau L_B},
\]
which is the heat-regularized scalar Casimir trace multiplier.  Thus the
covariance is input data for the representation, not a consequence derived
from an independent microscopic source model.
\end{remark}

\subsection{Regulated stochastic interaction energy}

Define the regulated brane interaction energy
\begin{equation}
    U_\tau
    :=\frac12\braket{\sigma_\tau}{gL_B^{-1}\sigma_\tau}.
    \label{eq:regulated-U}
\end{equation}
For $\tau>0$ this quadratic form is well-defined almost surely because $\sigma_\tau\in H_B$ and $L_B^{-1}$ is bounded by Assumption~\ref{ass:L_B}.  It has finite expectation because
\[
    \E[U_\tau]
    =\frac{\hbar c}{2}\sum_j\lambda_j^{1/2}e^{-\tau\lambda_j},
\]
which is finite under the heat-trace hypotheses.  The following theorem is the corresponding Gaussian quadratic-form identity.

The covariance in \eqref{eq:sigma-covariance} is part of the data of the
representation.  The following theorem records the quadratic-form identity
that follows from this prescribed covariance; it is not a derivation of the
covariance from an independent microscopic model.

\begin{theorem}[Quadratic-form representation under the prescribed covariance]
\label{thm:stochastic-trace}
For every $\tau>0$,
\begin{equation}
    \E[U_\tau]
    =\frac{\hbar c}{2}\Tr\left(L_B^{1/2}e^{-\tau L_B}\right).
    \label{eq:stochastic-trace-identity}
\end{equation}
\end{theorem}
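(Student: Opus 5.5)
The plan is to compute $\E[U_\tau]$ in the eigenbasis $\{u_j\}$ of $L_B$ from Assumption~\ref{ass:L_B}, and then identify the resulting series with the trace. Since $\sigma_\tau\in H_B$ almost surely and $gL_B^{-1}$ is bounded, Parseval expands the quadratic form \eqref{eq:regulated-U} as
\[
    U_\tau=\frac g2\sum_{j\ge1}\lambda_j^{-1}\,|\sigma_{\tau,j}|^2 .
\]
By the component formula \eqref{eq:sigma-components}, each term equals $\frac{\hbar c}{2}\lambda_j^{1/2}e^{-\tau\lambda_j}|\xi_j|^2$. The factor $g$ from the Green operator cancels the factor $1/g$ in the covariance, and $\lambda_j^{-1}\lambda_j^{3/2}=\lambda_j^{1/2}$.

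Next, take expectations term by term. All terms are nonnegative, so Tonelli (monotone convergence) permits exchanging $\E$ with the sum without any further integrability argument. Using $\E|\xi_j|^2=1$ from \eqref{eq:white-noise-expansion}, this gives
\[
    \E[U_\tau]=\frac{\hbar c}{2}\sum_j\lambda_j^{1/2}e^{-\tau\lambda_j}.
\]
An alternative, basis-free route is to write $\E\langle\sigma_\tau,gL_B^{-1}\sigma_\tau\rangle=\Tr\bigl(gL_B^{-1}\,\E[\sigma_\tau\otimes\sigma_\tau^*]\bigr)$ and insert \eqref{eq:sigma-covariance}. The justification there is that the covariance is trace class and $L_B^{-1}$ is bounded, so the product is trace class and the trace identity for Gaussian quadratic forms applies. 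I would include this as a remark but base the proof on the eigenbasis computation.

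Finally, identify the series with $\Tr(L_B^{1/2}e^{-\tau L_B})$. The operator $L_B^{1/2}e^{-\tau L_B}$ is positive and diagonal in $\{u_j\}$ with eigenvalues $\lambda_j^{1/2}e^{-\tau\lambda_j}$, so its trace is exactly this sum. Finiteness follows from the bound $x^{1/2}e^{-\tau x}\le C_\tau e^{-\tau x/2}$ together with $\Tr(e^{-\tau L_B/2})<\infty$.

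The only point needing care is the interchange of expectation and infinite sum, and it is handled by nonnegativity. In the complex case it also needs $\E|\xi_j|^2=1$, which holds by the normalization in \eqref{eq:white-noise-expansion}. I do not expect a genuine obstacle beyond keeping the constants $g$ and $\hbar c/g$ consistent.
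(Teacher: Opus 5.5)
Your proposal is correct and follows the paper's own proof essentially step for step: you expand $U_\tau$ in the eigenbasis using the component formula \eqref{eq:sigma-components}, take expectations term by term, and cancel $g$ against $\hbar c/g$ and $\lambda_j^{-1}$ against $\lambda_j^{3/2}$. Your explicit Tonelli justification for exchanging $\E$ with the sum, which the paper leaves implicit, and your finiteness check via $\Tr(e^{-\tau L_B/2})<\infty$ are useful tightenings but do not change the argument.
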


\begin{proof}
Using \eqref{eq:sigma-components} in \eqref{eq:regulated-U},
\[
    U_\tau
    =\frac{g}{2}\sum_j\frac{|\sigma_{\tau,j}|^2}{\lambda_j}.
\]
Taking expectation gives
\[
    \E[U_\tau]
    =\frac{g}{2}\sum_j\frac{1}{\lambda_j}
      \E[|\sigma_{\tau,j}|^2].
\]
By \eqref{eq:sigma-components},
\[
    \E[|\sigma_{\tau,j}|^2]
    =\frac{\hbar c}{g}\lambda_j^{3/2}e^{-\tau\lambda_j}.
\]
Therefore
\[
    \E[U_\tau]
    =\frac{g}{2}\sum_j\frac{1}{\lambda_j}\frac{\hbar c}{g}\lambda_j^{3/2}e^{-\tau\lambda_j}
    =\frac{\hbar c}{2}\sum_j\lambda_j^{1/2}e^{-\tau\lambda_j},
\]
which is \eqref{eq:stochastic-trace-identity}.
\end{proof}

\begin{remark}[Positivity and negative finite parts]
For each fixed $\tau>0$, the random variable $U_\tau$ is non-negative when $gL_B^{-1}$ is positive.  The renormalized finite part obtained as $\tau\to0^+$ need not be positive.  This is not a contradiction: the finite part is obtained after subtracting divergent local terms, and finite remainders of positive divergent quantities may be negative.  The same phenomenon occurs in standard Casimir calculations.
\end{remark}

\section{Renormalized finite parts}
\label{sec:renormalized}

The identity in Theorem~\ref{thm:stochastic-trace} is a regulated identity.  The corresponding scalar finite part is obtained by subtracting heat-kernel divergences.  This section records the finite-part statement used throughout the rest of the note.

\subsection{Finite-part prescription}

Let
\begin{equation}
    \mathcal{E}_\tau(L_B)
    :=\frac{\hbar c}{2}\Tr\left(L_B^{1/2}e^{-\tau L_B}\right).
    \label{eq:regularized-casimir-trace}
\end{equation}
The regulator used throughout this note is the heat cutoff $e^{-\tau L_B}$.  A finite-part prescription is specified as follows.  Assume that, as $\tau\to0^+$, $\mathcal{E}_\tau(L_B)$ admits an asymptotic expansion of the form
\begin{equation}
    \mathcal{E}_\tau(L_B)
    \sim
    \sum_{\beta\in\mathcal D} a_\beta(L_B)\tau^{-\beta}
    +a_{\log}(L_B)\log(\mu^2\tau)
    +a_0(L_B)
    +\sum_{\eta\in\mathcal P}
      \bigl(b_\eta(L_B)\tau^\eta
      +c_\eta(L_B)\tau^\eta\log(\mu^2\tau)\bigr).
    \label{eq:finite-part-expansion}
\end{equation}
Here $\mathcal D\subset(0,\infty)$ is finite, $\mathcal P\subset(0,\infty)$ indexes terms that vanish as $\tau\to0^+$, $\mu$ is a fixed reference scale used only when a logarithmic term is present, and the coefficients $a_\beta$ and $a_{\log}$ are local heat-kernel counterterms for elliptic boundary problems.  The minimal heat-kernel finite part is
\begin{equation}
    \FP_{\min,\tau\to0^+}\mathcal{E}_\tau(L_B)
    :=\lim_{\tau\to0^+}
    \left[\mathcal{E}_\tau(L_B)
      -\sum_{\beta\in\mathcal D} a_\beta(L_B)\tau^{-\beta}
      -a_{\log}(L_B)\log(\mu^2\tau)
    \right],
    \label{eq:minimal-finite-part}
\end{equation}
when the limit exists.  A general prescription $\mathcal R$ is obtained from \eqref{eq:minimal-finite-part} by adding a specified finite local counterterm $C_{\mathcal R}^{\mathrm{loc}}(L_B)$:
\begin{equation}
    \FP_{\mathcal R,\tau\to0^+}\mathcal{E}_\tau(L_B)
    :=\FP_{\min,\tau\to0^+}\mathcal{E}_\tau(L_B)
      +C_{\mathcal R}^{\mathrm{loc}}(L_B).
    \label{eq:R-finite-part}
\end{equation}
The same subtraction terms, reference scale, and finite local counterterm are applied to the stochastic expectation $\E[U_\tau]$ after identifying it with \eqref{eq:regularized-casimir-trace}.  For the parallel-plate specialization below, the prescription is the standard interaction-energy subtraction: remove the bulk and one-plate self-energy terms and retain the finite separation-dependent part, equivalently the zeta finite part recalled in Appendix~\ref{app:parallel-zeta}.

\begin{definition}[Renormalized scalar Casimir trace]
Given a fixed finite-part prescription $\mathcal{R}$, define
\begin{equation}
    E_{\mathrm{Cas},\mathcal{R}}(L_B)
    :=\FP_{\mathcal{R},\,\tau\to0^+}
    \frac{\hbar c}{2}\Tr\left(L_B^{1/2}e^{-\tau L_B}\right).
    \label{eq:renorm-casimir}
\end{equation}
\end{definition}

\begin{definition}[Renormalized expected stochastic interaction]
With $U_\tau$ as in \eqref{eq:regulated-U}, define
\begin{equation}
    \E[U_{\mathcal{R}}]
    :=\FP_{\mathcal{R},\,\tau\to0^+}\E[U_\tau].
    \label{eq:renorm-stochastic-energy}
\end{equation}
This is a renormalized expectation.  The construction does not require the existence of a renormalized random variable $U_{\mathcal{R}}(\omega)$ configuration-by-configuration.
\end{definition}

\begin{proposition}[Equality of renormalized finite parts]
\label{prop:renormalized-equality}
For every finite-part prescription $\mathcal{R}$ applied identically to both sides,
\begin{equation}
    \E[U_{\mathcal{R}}]=E_{\mathrm{Cas},\mathcal{R}}(L_B).
    \label{eq:renormalized-equality}
\end{equation}
\end{proposition}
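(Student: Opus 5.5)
The argument reduces to Theorem~\ref{thm:stochastic-trace}: for each fixed \(\tau>0\), the two regulated functionals are equal, so any operation built only from their values on \((0,\tau_0)\) gives the same result on both sides.

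First, I would state the pointwise identity
\[
\E[U_\tau]=\mathcal{E}_\tau(L_B)\qquad(\tau>0),
\]
which is exactly \eqref{eq:stochastic-trace-identity} in the notation \eqref{eq:regularized-casimir-trace}. From this it follows that \(\E[U_\tau]\) has an asymptotic expansion of the form \eqref{eq:finite-part-expansion} if and only if \(\mathcal{E}_\tau(L_B)\) does. Moreover, the two expansions have the same exponent sets \(\mathcal D,\mathcal P\) and the same coefficients \(a_\beta,a_{\log},a_0,b_\eta,c_\eta\). The key point is uniqueness of asymptotic expansions in the scale \(\{\tau^{-\beta},\log(\mu^2\tau),1,\tau^\eta,\tau^\eta\log(\mu^2\tau)\}\). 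These functions are asymptotically independent as \(\tau\to0^+\): dividing successively by the leading term and taking limits recovers each coefficient.

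Second, I would insert the identical counterterms into \eqref{eq:minimal-finite-part}. The bracketed expressions for the two sides then coincide for every \(\tau>0\). Hence one limit exists if and only if the other does, and the limits are equal. This gives equality of the minimal finite parts.

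Third, the prescription \(\mathcal R\) adds the same finite local counterterm \(C^{\mathrm{loc}}_{\mathcal R}(L_B)\) to both sides, as stipulated after \eqref{eq:R-finite-part}. That counterterm depends only on \(L_B\) and not on the representation. Definitions \eqref{eq:renorm-casimir} and \eqref{eq:renorm-stochastic-energy} then give \eqref{eq:renormalized-equality}.

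The only delicate point is making sure ``applied identically'' is meaningful. The subtraction coefficients for \(\E[U_\tau]\) must be the heat-kernel coefficients of \(L_B\), and not some other decomposition of the stochastic side. I would handle this by the uniqueness argument in the first step, which forces the coefficients to agree. The equality should also be read with the convention that one side exists exactly when the other does. No probabilistic input is needed beyond the fixed-\(\tau\) expectation identity, since the definition concerns only renormalized expectations.
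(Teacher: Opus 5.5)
Your proposal is correct and is the paper's argument: the regulated identity $\E[U_\tau]=\mathcal{E}_\tau(L_B)$ holds for every $\tau>0$, so applying the same finite-part operation to both sides (same subtractions, same limit, same finite local counterterm) gives the same value. Your appeal to uniqueness of asymptotic expansions is harmless but not really needed, because the two functions of $\tau$ are literally identical and so share every expansion and every subtracted limit.
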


\begin{proof}
The regulated identity \eqref{eq:stochastic-trace-identity} holds for every $\tau>0$.  Applying the same linear finite-part operation to both sides gives \eqref{eq:renormalized-equality}.
\end{proof}

\begin{remark}[Scheme dependence]
The numerical value of a finite part may change if finite local counterterms are changed.  Proposition~\ref{prop:renormalized-equality} is not a claim of scheme independence.  It is a claim that, under an identical scheme, the stochastic quadratic form and the scalar Casimir trace have the same finite part.
\end{remark}

\section{Scalar parallel-plate benchmark}
\label{sec:parallel-plates}

We now specialize the abstract result to the standard scalar parallel-plate geometry.  The purpose of this section is not to rederive the electromagnetic Casimir effect or to introduce a new physical plate model.  It is to evaluate the abstract scalar trace identity in the standard scalar Dirichlet parallel-plate geometry, thereby fixing the normalization used in the dimensionless comparison below.

\subsection{Finite lateral box}

Let
\[
    B_{L,a}:=\mathbb{T}_L^2\times[0,a],
\]
where $\mathbb{T}_L^2$ is a flat two-torus of side length $L$ and area $A=L^2$.  Let
\[
    L_B=-\Delta_{\mathbb{T}_L^2}-\partial_z^2
\]
with Dirichlet boundary conditions at $z=0$ and $z=a$.  The eigenvalues are
\begin{equation}
    \lambda_{\mathbf{m},n}
    =\left(\frac{2\pi}{L}\right)^2|\mathbf{m}|^2+\left(\frac{\pi n}{a}\right)^2,
    \qquad
    \mathbf{m}\in\mathbb{Z}^2,
    \quad n\in\mathbb{N}.
    \label{eq:plate-eigenvalues}
\end{equation}
The regulated stochastic identity gives
\begin{equation}
    \E[U_\tau]
    =\frac{\hbar c}{2}\sum_{\mathbf{m}\in\mathbb{Z}^2}\sum_{n=1}^\infty
    \lambda_{\mathbf{m},n}^{1/2}e^{-\tau\lambda_{\mathbf{m},n}}.
    \label{eq:finite-box-trace}
\end{equation}

\subsection{Large-area scalar result}

Passing to the large-area limit $L\to\infty$, the scalar Dirichlet finite part per unit area is
\begin{equation}
    \frac{E_{\mathrm{Cas}}^{(1)}}{A}
    =-\frac{\pi^2\hbar c}{1440a^3}.
    \label{eq:scalar-casimir-per-area}
\end{equation}
The superscript $(1)$ denotes one scalar channel.  A derivation by zeta regularization is recalled in Appendix~\ref{app:parallel-zeta}.  For the scalar Dirichlet plate geometry, the heat-kernel finite part used in the main text and the zeta-regularized finite separation-dependent term give the same coefficient after subtraction of the bulk and one-plate self-energy contributions.  We use the zeta calculation only as a compact way to recall this standard finite part.

Combining \eqref{eq:scalar-casimir-per-area} with Proposition~\ref{prop:renormalized-equality} gives
\begin{equation}
    \lim_{L\to\infty}\frac{1}{A}\E[U_{\mathcal{R}}^{(1)}]
    =-\frac{\pi^2\hbar c}{1440a^3},
    \label{eq:expected-stochastic-per-area-scalar}
\end{equation}
where $\mathcal{R}$ denotes the usual parallel-plate subtraction.

If $N$ independent scalar channels are included, then
\begin{equation}
    \lim_{L\to\infty}\frac{1}{A}\E[U_{\mathcal{R}}^{(N)}]
    =-\frac{N\pi^2\hbar c}{1440a^3}.
    \label{eq:expected-stochastic-per-area-N}
\end{equation}
For $N=2$, this gives the two-channel scalar plate coefficient
\begin{equation}
    \frac{E_{\mathrm{Cas}}^{(2)}}{A}
    =-\frac{\pi^2\hbar c}{720a^3}.
    \label{eq:two-channel-plate-energy}
\end{equation}

\begin{remark}[Scalar doubling versus vector boundary problems]
Equation~\eqref{eq:two-channel-plate-energy} is a scalar two-channel result.  It is obtained by taking two independent copies of the scalar trace identity.  A vector-field boundary-value problem would require the corresponding vector operator and transverse boundary structure.  The scalar theorem above is independent of such an extension.
\end{remark}

\subsection{The \texorpdfstring{$A=n^2a^2$}{A = n squared a squared} normalization}

For the reference-cell Green-energy normalization below, set
\[
    A=n^2a^2.
\]
Then \eqref{eq:expected-stochastic-per-area-N} becomes
\begin{equation}
    \E[U_{\mathcal{R}}^{(N)}]
    =-\frac{n^2}{a}\frac{N\pi^2\hbar c}{1440}
    \quad\text{in the large-area limit.}
    \label{eq:expected-stochastic-n-a}
\end{equation}
For $N=2$,
\begin{equation}
    \E[U_{\mathcal{R}}^{(2)}]
    =-\frac{n^2}{a}\frac{\pi^2\hbar c}{720}.
    \label{eq:expected-stochastic-two-channel-na}
\end{equation}

\section{Reference Green-energy normalization}
\label{sec:reference-green-normalization}
\label{sec:box-integral}

The finite-part plate energy in Section~\ref{sec:parallel-plates} has the
large-area form
\begin{equation}
    \E[U_{\mathcal R}^{(N)}]
    =
    -\frac{n^2}{a}\frac{N\pi^2\hbar c}{1440}
    \label{eq:reference-plate-energy-na}
\end{equation}
when the lateral area is written as \(A=n^2a^2\).  Thus the plate separation
\(a\) fixes both the transverse Dirichlet scale of the parallel-plate
operator and the lateral area scale \(a^2\) of a single reference cell.

The question addressed in this section is separate from the proof of the
stochastic trace identity: once the scalar finite part has been written as a
Green-energy expectation, choose a deterministic flat \(L_0^{-1}\) reference
energy at the same plate scale and determine the plate-compatible
rectangular cell for which the resulting calibration coefficient is
extremal.

This section defines a deterministic flat Green-energy normalization adapted
to that plate scale.  The selection statements below are conditional on the
plate-compatible rectangular product class
\[
    R_{\ell_1,\ell_2,a}
    =
    [0,\ell_1]\times[0,\ell_2]\times[0,a],
    \qquad
    \ell_1\ell_2=a^2.
\]
No optimality over arbitrary unit-volume subsets of \(\R^3\) is asserted.
Within this rectangular product class, the cubical cell is selected by two
spectral criteria internal to the plate geometry: it uniquely maximizes the
free lateral spectral gap, and it uniquely minimizes the first
shape-dependent artificial-boundary coefficient in the mixed heat trace.  The
same cubical cell also maximizes the deterministic flat Green energy
\(\Delta(\alpha)\) over the plate-compatible rectangular aspect-ratio family,
and hence minimizes the associated scalar comparison coefficient.

\subsection{Plate-compatible cells and free lateral faces}
\label{sec:plate-compatible-cells}

Let
\begin{equation}
    Q_{\ell_1,\ell_2}:=[0,\ell_1]\times[0,\ell_2],
    \qquad
    R_{\ell_1,\ell_2,a}:=Q_{\ell_1,\ell_2}\times[0,a],
    \label{eq:reference-rectangular-cell}
\end{equation}
with
\begin{equation}
    \ell_1\ell_2=a^2.
    \label{eq:reference-area-constraint}
\end{equation}
Equivalently, write
\begin{equation}
    \ell_1=\alpha a,
    \qquad
    \ell_2=\alpha^{-1}a,
    \qquad
    \alpha>0.
    \label{eq:reference-aspect-parametrization}
\end{equation}
The corresponding unit-volume reference cell is
\begin{equation}
    D_\alpha
    :=
    [0,\alpha]\times[0,\alpha^{-1}]\times[0,1],
    \qquad
    R_{\ell_1,\ell_2,a}=aD_\alpha.
    \label{eq:unit-reference-aspect-cell}
\end{equation}
The cubical cell corresponds to \(\alpha=1\):
\[
    C=[0,1]^3,
    \qquad
    C_a=[0,a]^3.
\]

The lateral cell faces are artificial cuts introduced by the reference-cell
decomposition.  They are not physical conducting plates.  Accordingly, the
lateral spectral scale used below is the one associated with the
unconstrained lateral Dirichlet energy form.

Let
\[
    \mathfrak q_{\parallel,\ell_1,\ell_2}[v]
    :=
    \int_{Q_{\ell_1,\ell_2}}|\nabla_\parallel v|^2\,\dd x\,\dd y,
    \qquad
    \Dom(\mathfrak q_{\parallel,\ell_1,\ell_2})
    =
    H^1(Q_{\ell_1,\ell_2}).
\]
By the representation theorem for closed semibounded quadratic forms, this
form determines the Neumann Laplacian on \(Q_{\ell_1,\ell_2}\).  In this
precise sense, Neumann conditions are the free-boundary realization of the
local lateral energy on an artificial cell face.

For the full rectangular cell define the mixed form
\begin{equation}
    \mathfrak q_{\ell_1,\ell_2,a}[u]
    :=
    \int_{R_{\ell_1,\ell_2,a}}|\nabla u|^2\,\dd x\,\dd y\,\dd z,
    \label{eq:mixed-cell-form}
\end{equation}
with domain
\begin{equation}
    \Dom(\mathfrak q_{\ell_1,\ell_2,a})
    =
    \left\{
        u\in H^1(R_{\ell_1,\ell_2,a}):
        u|_{z=0}=u|_{z=a}=0
        \text{ in the trace sense}
    \right\}.
    \label{eq:mixed-cell-form-domain}
\end{equation}
Thus Dirichlet conditions are imposed only on the physical plate faces, while
no trace constraint is imposed on the artificial lateral faces.

Let
\[
    L_{\ell_1,\ell_2,a}^{\mathrm{cell}}
\]
be the positive self-adjoint operator associated with
\(\mathfrak q_{\ell_1,\ell_2,a}\).

\begin{lemma}[Mixed cell spectrum]
\label{lem:mixed-cell-spectrum}
The operator \(L_{\ell_1,\ell_2,a}^{\mathrm{cell}}\) is the Laplacian on
\(R_{\ell_1,\ell_2,a}\) with Neumann conditions on the lateral faces
\(x=0,\ell_1\), \(y=0,\ell_2\), and Dirichlet conditions on the plate faces
\(z=0,a\).  Its eigenvalues are
\begin{equation}
    \lambda_{m,n,r}^{\mathrm{cell}}
    =
    \frac{\pi^2m^2}{\ell_1^2}
    +
    \frac{\pi^2n^2}{\ell_2^2}
    +
    \frac{\pi^2r^2}{a^2},
    \qquad
    m,n\in\mathbb N_0,\quad r\in\mathbb N.
    \label{eq:mixed-cell-eigenvalues}
\end{equation}
\end{lemma}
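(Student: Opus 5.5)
The plan is to use separation of variables together with the standard tensor-product structure of closed quadratic forms. Write
\[
    L^2(R_{\ell_1,\ell_2,a})=L^2(0,\ell_1)\otimes L^2(0,\ell_2)\otimes L^2(0,a).
\]
On the first two factors we take the one-dimensional Neumann forms \(\int|v'|^2\) with domain \(H^1\). On the third factor we take the Dirichlet form with domain \(H_0^1(0,a)\). Each of these is closed and nonnegative. Their associated operators are the Neumann Laplacians \(-\partial_x^2\) and \(-\partial_y^2\), and the Dirichlet Laplacian \(-\partial_z^2\). The first step is to record their explicit orthonormal eigenbases, which are elementary:
\[
    \sqrt{\tfrac{2-\delta_{m0}}{\ell_1}}\cos\frac{\pi m x}{\ell_1},\qquad
    \sqrt{\tfrac{2-\delta_{n0}}{\ell_2}}\cos\frac{\pi n y}{\ell_2},\qquad
    \sqrt{\tfrac{2}{a}}\sin\frac{\pi r z}{a},
\]
with eigenvalues \(\pi^2m^2/\ell_1^2\) for \(m\ge0\), \(\pi^2n^2/\ell_2^2\) for \(n\ge0\), and \(\pi^2r^2/a^2\) for \(r\ge1\).

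The second step identifies \(\mathfrak q_{\ell_1,\ell_2,a}\) with the form sum of the three one-dimensional forms acting on the tensor product. Concretely, I would show two things:
\begin{itemize}
\item the form domain \eqref{eq:mixed-cell-form-domain} equals the intersection of the domains of the three partial-derivative forms, namely the set of \(u\in L^2\) with \(\partial_xu,\partial_yu,\partial_zu\in L^2\) and \(u\) vanishing at \(z=0,a\);
\item on this domain \(\mathfrak q[u]=\|\partial_xu\|^2+\|\partial_yu\|^2+\|\partial_zu\|^2\).
\end{itemize}
The operator associated with such a sum of commuting tensor-product forms is the closure of \(A_1\otimes I\otimes I+I\otimes A_2\otimes I+I\otimes I\otimes A_3\). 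Its spectrum consists of all sums of eigenvalues, and the products of the one-dimensional eigenfunctions form a complete eigenbasis. This gives \eqref{eq:mixed-cell-eigenvalues}.

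Alternatively, and perhaps more cleanly, I would argue directly in the product basis \(\phi_{m,n,r}\), which is orthonormal and complete in \(L^2(R)\) since each factor basis is complete. Expanding \(u=\sum c_{mnr}\phi_{mnr}\), the claim is that \(u\) lies in the form domain if and only if \(\sum\lambda^{\mathrm{cell}}_{mnr}|c_{mnr}|^2<\infty\), and that in that case \(\mathfrak q[u]\) equals this sum. The inequality ``\(\le\)'' direction follows by computing \(\mathfrak q\) on finite linear combinations, where the functions \(\partial_x\phi_{mnr}\) are again mutually orthogonal (sines in \(x\)), and then using closedness. The reverse direction expands \(\partial_xu\) in the sine basis in \(x\) and integrates by parts against \(\phi_{mnr}\). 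No boundary term appears at \(x=0,\ell_1\) because the cosines have zero derivative there. In \(z\), the boundary term vanishes because \(u\) has zero trace at the plates. This yields \(\langle\partial_xu,\partial_x\phi\rangle\)-type identities, and Parseval then gives the equality. A form diagonal in an orthonormal basis has an associated operator diagonal in that basis with the same eigenvalues. Finally, the operator acts as \(-\Delta\) on smooth functions in its domain satisfying the stated Neumann and Dirichlet conditions, which gives the identification in the first sentence of the lemma. This uses the standard fact that the natural boundary conditions of the form are Neumann on the faces where no trace is imposed.

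The main obstacle is the domain identification in the reverse direction: showing that every \(u\in H^1(R)\) with zero plate traces has coefficient sequence weighted-summable. This requires the integration by parts against \(\phi_{mnr}\), which is legitimate for \(H^1\) functions on a Lipschitz box via the trace theorem. The only boundary contributions come from the \(z\)-faces, and those vanish by the domain condition. I would handle this by density of smooth functions up to the boundary and vanishing near \(z=0,a\) in the form domain, which holds for a product box.
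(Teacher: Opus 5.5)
Your proposal is correct and takes essentially the paper's approach: identify \(\mathfrak q_{\ell_1,\ell_2,a}\) as the tensor sum of two one-dimensional Neumann forms on \(H^1\) and the Dirichlet form on \(H^1_0(0,a)\), then read off the separated eigenbasis \(\cos(\pi m x/\ell_1)\cos(\pi n y/\ell_2)\sin(\pi r z/a)\). Your coefficient-level route (closedness for one inclusion, integration by parts against \(\phi_{m,n,r}\) for the other) supplies the form-domain identification that the paper asserts in one line; the one detail to add is that in the \(z\)-direction Parseval step the \(r=0\) cosine coefficient of \(\partial_z u\) vanishes because of the zero plate traces (or use Bessel's inequality there instead).
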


\begin{proof}
The one-dimensional quadratic form
\[
    \int_0^\ell |f'(x)|^2\,\dd x,
    \qquad
    \Dom=H^1(0,\ell),
\]
has the Neumann Laplacian as its associated operator, with eigenfunctions
\(\cos(\pi m x/\ell)\), \(m\in\mathbb N_0\), and eigenvalues
\(\pi^2m^2/\ell^2\).  The one-dimensional form
\[
    \int_0^a |g'(z)|^2\,\dd z,
    \qquad
    \Dom=H^1_0(0,a),
\]
has the Dirichlet Laplacian as its associated operator, with eigenfunctions
\(\sin(\pi r z/a)\), \(r\in\mathbb N\), and eigenvalues
\(\pi^2r^2/a^2\).  The form
\(\mathfrak q_{\ell_1,\ell_2,a}\) is the tensor-sum form of these three
one-dimensional forms.  Therefore its associated operator has the separated
eigenbasis
\[
    \cos\!\left(\frac{\pi m x}{\ell_1}\right)
    \cos\!\left(\frac{\pi n y}{\ell_2}\right)
    \sin\!\left(\frac{\pi r z}{a}\right),
    \qquad
    m,n\in\mathbb N_0,\quad r\in\mathbb N,
\]
with eigenvalues \eqref{eq:mixed-cell-eigenvalues}.
\end{proof}

\subsection{Extremal lateral spectral scale}
\label{sec:extremal-lateral-scale}

The first transverse Dirichlet eigenvalue of the plate direction
\(z\in(0,a)\) is
\begin{equation}
    \lambda_\perp(a)
    :=
    \frac{\pi^2}{a^2}.
    \label{eq:first-transverse-dirichlet-scale}
\end{equation}
The first positive lateral Neumann eigenvalue on
\(Q_{\ell_1,\ell_2}\) is
\begin{equation}
    \mu_\parallel(\ell_1,\ell_2)
    :=
    \min\left\{
        \frac{\pi^2}{\ell_1^2},
        \frac{\pi^2}{\ell_2^2}
    \right\}
    =
    \frac{\pi^2}{\max\{\ell_1,\ell_2\}^2}.
    \label{eq:first-lateral-neumann-scale}
\end{equation}
This is the first positive lateral increment above the lowest transverse
Dirichlet mode in \eqref{eq:mixed-cell-eigenvalues}.

\begin{definition}[Plate-scale saturation]
\label{def:plate-scale-saturation}
A rectangular cell \(R_{\ell_1,\ell_2,a}\) satisfying
\(\ell_1\ell_2=a^2\) is called plate-scale saturated if
\begin{equation}
    \mu_\parallel(\ell_1,\ell_2)=\lambda_\perp(a).
    \label{eq:plate-scale-saturation}
\end{equation}
\end{definition}

\begin{proposition}[Extremal lateral spectral scale]
\label{prop:spectral-selection-cube}
Among all rectangular cells
\[
    R_{\ell_1,\ell_2,a}
    =
    [0,\ell_1]\times[0,\ell_2]\times[0,a],
    \qquad
    \ell_1\ell_2=a^2,
\]
one has
\begin{equation}
    \mu_\parallel(\ell_1,\ell_2)
    \leq
    \lambda_\perp(a).
    \label{eq:lateral-gap-upper-bound}
\end{equation}
Moreover,
\begin{equation}
    \max_{\ell_1\ell_2=a^2}\mu_\parallel(\ell_1,\ell_2)
    =
    \lambda_\perp(a),
    \label{eq:max-lateral-gap}
\end{equation}
and the maximum is attained if and only if
\begin{equation}
    \ell_1=\ell_2=a.
    \label{eq:cube-equality-condition}
\end{equation}
Thus \(C_a=[0,a]^3\) is the unique plate-scale saturated cell in this
rectangular product class.
\end{proposition}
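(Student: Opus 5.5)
The plan is to reduce everything to an elementary inequality about $\max\{\ell_1,\ell_2\}$ under the area constraint. By \eqref{eq:first-lateral-neumann-scale}, $\mu_\parallel(\ell_1,\ell_2)=\pi^2/\max\{\ell_1,\ell_2\}^2$, and by \eqref{eq:first-transverse-dirichlet-scale}, $\lambda_\perp(a)=\pi^2/a^2$. Since $t\mapsto \pi^2/t^2$ is strictly decreasing on $(0,\infty)$, the inequality \eqref{eq:lateral-gap-upper-bound} is equivalent to $\max\{\ell_1,\ell_2\}\geq a$. Equality in \eqref{eq:lateral-gap-upper-bound} is equivalent to $\max\{\ell_1,\ell_2\}=a$.

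First I will prove $\max\{\ell_1,\ell_2\}\geq a$ from the constraint \eqref{eq:reference-area-constraint}. Suppose instead that both $\ell_1<a$ and $\ell_2<a$. Then $\ell_1\ell_2<a^2$, which is a contradiction. Alternatively, in the parametrization \eqref{eq:reference-aspect-parametrization}, one has $\max\{\ell_1,\ell_2\}=a\max\{\alpha,\alpha^{-1}\}\geq a$, because $\max\{\alpha,\alpha^{-1}\}\geq1$ for every $\alpha>0$. This gives \eqref{eq:lateral-gap-upper-bound}.

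Next I will characterize equality. If $\max\{\ell_1,\ell_2\}=a$, say $\ell_1=a\geq\ell_2$, then the constraint forces $\ell_2=a^2/\ell_1=a$. The symmetric case $\ell_2=a\geq\ell_1$ is handled the same way. In the $\alpha$-parametrization this reads $\max\{\alpha,\alpha^{-1}\}=1$ if and only if $\alpha=1$. Conversely, $\ell_1=\ell_2=a$ satisfies the constraint and gives $\mu_\parallel=\pi^2/a^2=\lambda_\perp(a)$. Hence the supremum over the family is attained, so it is a maximum equal to $\lambda_\perp(a)$; this is \eqref{eq:max-lateral-gap}. It is attained exactly at \eqref{eq:cube-equality-condition}.

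Finally, by Definition~\ref{def:plate-scale-saturation}, plate-scale saturation means equality in \eqref{eq:plate-scale-saturation}. So the equality characterization shows directly that $C_a=[0,a]^3$ is the unique saturated cell. No step is a real obstacle here; the only point needing care is that $\mu_\parallel$ is the first \emph{positive} Neumann eigenvalue, so the $m=n=0$ mode is excluded. This is already built into \eqref{eq:first-lateral-neumann-scale} via Lemma~\ref{lem:mixed-cell-spectrum}.
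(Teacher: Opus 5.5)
Your proposal is correct and follows essentially the same route as the paper. Both write $\mu_\parallel=\pi^2/\max\{\ell_1,\ell_2\}^2$, use $\ell_1\ell_2=a^2$ to get $\max\{\ell_1,\ell_2\}\geq a$, and note that equality $\max\{\ell_1,\ell_2\}=a$ forces $\ell_1=\ell_2=a$ under the constraint. Your explicit check that the cube attains the bound and your $\alpha$-reformulation are additions the paper leaves implicit or places in Corollary~\ref{cor:aspect-ratio-gap}; neither changes the argument.
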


\begin{proof}
By \eqref{eq:first-lateral-neumann-scale},
\[
    \mu_\parallel(\ell_1,\ell_2)
    =
    \frac{\pi^2}{\max\{\ell_1,\ell_2\}^2}.
\]
Since \(\ell_1\ell_2=a^2\), one has
\[
    \max\{\ell_1,\ell_2\}\geq a.
\]
Therefore
\[
    \mu_\parallel(\ell_1,\ell_2)
    \leq
    \frac{\pi^2}{a^2}
    =
    \lambda_\perp(a).
\]
Equality holds if and only if
\(\max\{\ell_1,\ell_2\}=a\).  Under the constraint
\(\ell_1\ell_2=a^2\), this is equivalent to
\(\ell_1=\ell_2=a\).  Hence the maximum of
\(\mu_\parallel\) over the admissible rectangular class is
\(\lambda_\perp(a)\), and the maximizer is unique.
\end{proof}

\begin{corollary}[Aspect-ratio form]
\label{cor:aspect-ratio-gap}
For the parametrization
\[
    \ell_1=\alpha a,
    \qquad
    \ell_2=\alpha^{-1}a,
    \qquad
    \alpha>0,
\]
one has
\begin{equation}
    \frac{\mu_\parallel(\alpha a,\alpha^{-1}a)}
         {\lambda_\perp(a)}
    =
    \min\{\alpha^2,\alpha^{-2}\}.
    \label{eq:aspect-ratio-gap-ratio}
\end{equation}
Thus the lateral spectral scale is maximized exactly at
\(\alpha=1\).
\end{corollary}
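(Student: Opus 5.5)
The plan is to substitute the parametrization $\ell_1=\alpha a$, $\ell_2=\alpha^{-1}a$ directly into the formula \eqref{eq:first-lateral-neumann-scale} for $\mu_\parallel$. This gives
\[
    \mu_\parallel(\alpha a,\alpha^{-1}a)
    =
    \min\left\{\frac{\pi^2}{\alpha^2a^2},\frac{\pi^2\alpha^2}{a^2}\right\}
    =
    \frac{\pi^2}{a^2}\min\{\alpha^{-2},\alpha^{2}\}.
\]
Dividing by $\lambda_\perp(a)=\pi^2/a^2$ from \eqref{eq:first-transverse-dirichlet-scale} yields \eqref{eq:aspect-ratio-gap-ratio} immediately. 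Note that the parametrization automatically satisfies the constraint $\ell_1\ell_2=a^2$, and every admissible pair arises from exactly one $\alpha>0$, namely $\alpha=\ell_1/a$. So nothing is lost in passing to the aspect-ratio form.

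For the maximization claim, I would observe that $h(\alpha):=\min\{\alpha^2,\alpha^{-2}\}$ takes one of two forms. For $\alpha\le1$ it equals $\alpha^2$, which is strictly increasing. For $\alpha\ge1$ it equals $\alpha^{-2}$, which is strictly decreasing. Hence $h(\alpha)\le1$, with equality if and only if $\alpha=1$.

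Alternatively, I would simply invoke Proposition~\ref{prop:spectral-selection-cube}: its equality condition $\ell_1=\ell_2=a$ translates under the bijection $\alpha\mapsto(\alpha a,\alpha^{-1}a)$ into $\alpha=1$. There is no real obstacle here. The only point needing care is checking that the minimum in \eqref{eq:first-lateral-neumann-scale} is correctly matched to $\max\{\ell_1,\ell_2\}$ after the substitution, which is exactly the case split $\alpha\lessgtr1$ described above.
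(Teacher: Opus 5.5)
Your proposal is correct and matches the paper's proof: substitute the parametrization into \eqref{eq:first-lateral-neumann-scale}, divide by $\lambda_\perp(a)=\pi^2/a^2$, and note that $\min\{\alpha^2,\alpha^{-2}\}\le 1$ with equality if and only if $\alpha=1$. Your extra remarks (that $\alpha=\ell_1/a$ gives a bijection onto the constrained family, and the alternative route via Proposition~\ref{prop:spectral-selection-cube}) are accurate but not needed.
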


\begin{proof}
Substituting
\(\ell_1=\alpha a\) and \(\ell_2=\alpha^{-1}a\) into
\eqref{eq:first-lateral-neumann-scale} gives
\[
    \mu_\parallel(\alpha a,\alpha^{-1}a)
    =
    \min\left\{
        \frac{\pi^2}{\alpha^2a^2},
        \frac{\pi^2\alpha^2}{a^2}
    \right\}.
\]
Dividing by \(\lambda_\perp(a)=\pi^2/a^2\) gives
\eqref{eq:aspect-ratio-gap-ratio}.  The function
\(\min\{\alpha^2,\alpha^{-2}\}\) is at most \(1\), with equality if and only
if \(\alpha=1\).
\end{proof}

\subsection{Mixed-cell heat trace and artificial-boundary scale}
\label{sec:mixed-cell-heat-trace}

The preceding selection uses the first positive lateral eigenvalue.  The
same cubical cell is also selected by the first shape-dependent coefficient
in the short-time heat trace of the full mixed cell operator
\(L_{\ell_1,\ell_2,a}^{\mathrm{cell}}\).  This connects the reference-cell
choice to the heat-trace structure used in the scalar Casimir finite-part
construction.

Define the one-dimensional Neumann and Dirichlet heat sums
\begin{equation}
    \Theta_N(\ell;t)
    :=
    \sum_{m=0}^{\infty}
    e^{-\pi^2m^2t/\ell^2},
    \qquad
    \Theta_D(\ell;t)
    :=
    \sum_{r=1}^{\infty}
    e^{-\pi^2r^2t/\ell^2}.
    \label{eq:theta-ND-definitions}
\end{equation}
By separation of variables,
\begin{equation}
    K_{\ell_1,\ell_2,a}(t)
    :=
    \Tr\!\left(e^{-tL_{\ell_1,\ell_2,a}^{\mathrm{cell}}}\right)
    =
    \Theta_N(\ell_1;t)\Theta_N(\ell_2;t)\Theta_D(a;t).
    \label{eq:mixed-cell-heat-trace-factorization}
\end{equation}

\begin{proposition}[Short-time heat trace and cubical minimization]
\label{prop:heat-trace-cube-selection}
As \(t\downarrow0\),
\begin{equation}
    K_{\ell_1,\ell_2,a}(t)
    =
    \frac{\ell_1\ell_2a}{8\pi^{3/2}}t^{-3/2}
    +
    \frac{a(\ell_1+\ell_2)-\ell_1\ell_2}{8\pi}t^{-1}
    +
    O_{\ell_1,\ell_2,a}(t^{-1/2}).
    \label{eq:mixed-cell-heat-trace-expansion}
\end{equation}
Under the plate-area constraint \(\ell_1\ell_2=a^2\), this becomes
\begin{equation}
    K_{\ell_1,\ell_2,a}(t)
    =
    \frac{a^3}{8\pi^{3/2}}t^{-3/2}
    +
    B(\ell_1,\ell_2,a)t^{-1}
    +
    O_{\ell_1,\ell_2,a}(t^{-1/2}),
    \label{eq:mixed-cell-heat-trace-expansion-area}
\end{equation}
where
\begin{equation}
    B(\ell_1,\ell_2,a)
    :=
    \frac{a(\ell_1+\ell_2)-a^2}{8\pi}.
    \label{eq:mixed-cell-B-coefficient}
\end{equation}
The coefficient \(B(\ell_1,\ell_2,a)\) is minimized over
\(\ell_1\ell_2=a^2\) if and only if
\[
    \ell_1=\ell_2=a.
\]
At the minimum,
\begin{equation}
    B(a,a,a)=\frac{a^2}{8\pi}.
    \label{eq:B-cube-value}
\end{equation}
Equivalently, the artificial lateral contribution
\[
    B_{\mathrm{lat}}(\ell_1,\ell_2,a)
    :=
    \frac{a(\ell_1+\ell_2)}{8\pi}
    \label{eq:lateral-heat-coefficient}
\]
is uniquely minimized by the cube.
\end{proposition}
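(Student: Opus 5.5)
The plan is to exploit the factorization \eqref{eq:mixed-cell-heat-trace-factorization} and expand each one-dimensional theta sum separately as \(t\downarrow0\). For the one-dimensional sums I will use the Jacobi theta function \(\vartheta(x)=\sum_{m\in\mathbb Z}e^{-\pi^2m^2x}\). Poisson summation gives \(\vartheta(x)=(\pi x)^{-1/2}\bigl(1+O(e^{-1/x})\bigr)\) as \(x\downarrow0\). Writing \(\Theta_N(\ell;t)=\tfrac12\bigl(\vartheta(t/\ell^2)+1\bigr)\) and \(\Theta_D(\ell;t)=\tfrac12\bigl(\vartheta(t/\ell^2)-1\bigr)\), I then obtain, with exponentially small errors,
\[
    \Theta_N(\ell;t)=\frac{\ell}{2\sqrt{\pi t}}+\frac12+O(t^\infty),
    \qquad
    \Theta_D(a;t)=\frac{a}{2\sqrt{\pi t}}-\frac12+O(t^\infty).
\]
These are the one-dimensional Weyl terms, each with the boundary term \(\pm\tfrac12\) coming from the two endpoints.

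Next I multiply the three expansions. The product of the leading terms is \(\ell_1\ell_2 a/(8\pi^{3/2})\,t^{-3/2}\). The \(t^{-1}\) coefficient collects the terms that contain exactly one boundary factor. Taking \(+\tfrac12\) from either Neumann factor contributes \(\ell_2 a/(8\pi)\) or \(\ell_1 a/(8\pi)\), and taking \(-\tfrac12\) from the Dirichlet factor contributes \(-\ell_1\ell_2/(8\pi)\). Together these give \(\bigl(a(\ell_1+\ell_2)-\ell_1\ell_2\bigr)/(8\pi)\). The remaining products are of order \(t^{-1/2}\) and \(t^0\), and the exponentially small corrections are absorbed into \(O(t^{-1/2})\). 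This proves \eqref{eq:mixed-cell-heat-trace-expansion}, and substituting \(\ell_1\ell_2=a^2\) gives \eqref{eq:mixed-cell-heat-trace-expansion-area} and \eqref{eq:mixed-cell-B-coefficient}.

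For the minimization I apply AM--GM under the constraint: \(\ell_1+\ell_2\geq2\sqrt{\ell_1\ell_2}=2a\), with equality if and only if \(\ell_1=\ell_2=a\). Equivalently, in the parametrization \eqref{eq:reference-aspect-parametrization} one has \(\ell_1+\ell_2=a(\alpha+\alpha^{-1})\geq2a\), with equality only at \(\alpha=1\). Since \(a>0\) is fixed, both \(B\) and \(B_{\mathrm{lat}}\) are strictly increasing functions of \(\ell_1+\ell_2\), so each is uniquely minimized by the cube. At the cube, \(B(a,a,a)=(2a^2-a^2)/(8\pi)=a^2/(8\pi)\), which is \eqref{eq:B-cube-value}.

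The only step needing real care is the Poisson-summation justification of the one-dimensional expansions with uniform exponentially small remainders. I regard that step as standard via the Jacobi identity \(\vartheta(x)=(\pi x)^{-1/2}\sum_k e^{-k^2/x}\). Everything else is bookkeeping of the product expansion.
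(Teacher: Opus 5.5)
Your proposal is correct and follows essentially the same route as the paper. You substitute the Jacobi/Poisson expansions $\Theta_{N}(\ell;t)=\frac{\ell}{2\sqrt{\pi}}t^{-1/2}+\frac12+O(t^{-1/2}e^{-\ell^2/t})$ and $\Theta_{D}(\ell;t)=\frac{\ell}{2\sqrt{\pi}}t^{-1/2}-\frac12+O(t^{-1/2}e^{-\ell^2/t})$ into the product factorization, collect the $t^{-3/2}$ and $t^{-1}$ terms, and then apply AM--GM under $\ell_1\ell_2=a^2$, exactly as the paper does; your reduction of $\Theta_N,\Theta_D$ to the full-line theta function $\vartheta$ is only a slightly more explicit justification of the Jacobi step the paper cites.
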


\begin{proof}
The Jacobi transformation gives, as \(t\downarrow0\),
\begin{equation}
    \Theta_N(\ell;t)
    =
    \frac{\ell}{2\sqrt{\pi}}t^{-1/2}
    +
    \frac12
    +
    O_\ell(t^{-1/2}e^{-\ell^2/t}),
    \label{eq:theta-neumann-small-t}
\end{equation}
and
\begin{equation}
    \Theta_D(\ell;t)
    =
    \frac{\ell}{2\sqrt{\pi}}t^{-1/2}
    -
    \frac12
    +
    O_\ell(t^{-1/2}e^{-\ell^2/t}).
    \label{eq:theta-dirichlet-small-t}
\end{equation}
Substituting these expansions into
\eqref{eq:mixed-cell-heat-trace-factorization} gives the leading term
\[
    \frac{\ell_1}{2\sqrt{\pi}}t^{-1/2}
    \frac{\ell_2}{2\sqrt{\pi}}t^{-1/2}
    \frac{a}{2\sqrt{\pi}}t^{-1/2}
    =
    \frac{\ell_1\ell_2a}{8\pi^{3/2}}t^{-3/2}.
\]
The \(t^{-1}\) coefficient is the sum of the three terms in which two
factors contribute their leading \(t^{-1/2}\) term and one factor contributes
the constant boundary term:
\[
    \frac{a\ell_1}{8\pi}
    +
    \frac{a\ell_2}{8\pi}
    -
    \frac{\ell_1\ell_2}{8\pi}
    =
    \frac{a(\ell_1+\ell_2)-\ell_1\ell_2}{8\pi}.
\]
This proves \eqref{eq:mixed-cell-heat-trace-expansion}.

Under the area constraint \(\ell_1\ell_2=a^2\), the leading volume
coefficient is fixed, and
\[
    B(\ell_1,\ell_2,a)
    =
    \frac{a(\ell_1+\ell_2)-a^2}{8\pi}.
\]
By the arithmetic-geometric mean inequality,
\[
    \ell_1+\ell_2\geq 2\sqrt{\ell_1\ell_2}=2a,
\]
with equality if and only if \(\ell_1=\ell_2=a\).  Therefore
\[
    B(\ell_1,\ell_2,a)
    \geq
    \frac{2a^2-a^2}{8\pi}
    =
    \frac{a^2}{8\pi},
\]
with equality if and only if the cell is cubical.  The same AM--GM argument
applied to \(B_{\mathrm{lat}}\) proves the final statement.
\end{proof}

\begin{remark}[Boundary interpretation of the \(t^{-1}\) coefficient]
The \(t^{-1}\) coefficient in
\eqref{eq:mixed-cell-heat-trace-expansion} is the mixed-boundary surface
coefficient for the rectangular cell.  The term
\(a(\ell_1+\ell_2)/(8\pi)\) comes from the four artificial lateral Neumann
faces, whose total area is \(2a(\ell_1+\ell_2)\).  The term
\(-\ell_1\ell_2/(8\pi)\) comes from the two physical Dirichlet plate faces,
whose total area is \(2\ell_1\ell_2\).  Under
\(\ell_1\ell_2=a^2\), the physical Dirichlet contribution is fixed, while
the artificial lateral contribution is uniquely minimized by the cube.
\end{remark}

\subsection{Flat Green energy and aspect-ratio monotonicity}
\label{sec:flat-green-aspect-monotonicity}

Let \(D\subset\R^3\) be a bounded measurable reference cell with
\(|D|=1\).  Define
\begin{equation}
    \Delta_D
    :=
    \int_D\int_D\frac{\dd^3x\,\dd^3y}{|x-y|}.
    \label{eq:reference-cell-delta}
\end{equation}
This integral is finite.  If \(D\) has diameter bounded by \(R\), then for
each fixed \(x\in D\),
\[
    \int_D\frac{\dd^3y}{|x-y|}
    \leq
    \int_{|z|\leq R}\frac{\dd^3z}{|z|}
    =
    2\pi R^2.
\]
Hence \(\Delta_D<\infty\).

Let \(L_0=-\Delta\) on \(\R^3\).  With the positive Laplacian convention,
\[
    L_0^{-1}(x,y)=\frac{1}{4\pi|x-y|}.
\]
Therefore
\begin{equation}
    \Delta_D
    =
    4\pi
    \braket{\chi_D}{L_0^{-1}\chi_D}_{L^2(\R^3)}.
    \label{eq:reference-cell-green}
\end{equation}
Thus \(\Delta_D\) is the deterministic flat Green energy of the
unit-density reference source \(\chi_D\), written using the inverse-distance
kernel rather than the operator-normalized kernel
\((4\pi)^{-1}|x-y|^{-1}\).

For the plate-compatible aspect-ratio family \eqref{eq:unit-reference-aspect-cell}, write
\begin{equation}
    \Delta(\alpha):=\Delta_{D_\alpha},
    \qquad
    D_\alpha=[0,\alpha]\times[0,\alpha^{-1}]\times[0,1].
    \label{eq:delta-alpha-definition}
\end{equation}
The following proposition shows that the cubical cell is also selected by the
flat Green-energy functional itself within this plate-compatible rectangular
family.

\begin{proposition}[Aspect-ratio monotonicity of the flat Green energy]
\label{prop:aspect-ratio-green-monotonicity}
For \(\alpha>0\), let \(\Delta(\alpha)\) be defined by
\eqref{eq:delta-alpha-definition}.  Then
\begin{equation}
    \Delta(\alpha)=\Delta(\alpha^{-1}).
    \label{eq:delta-alpha-symmetry}
\end{equation}
Moreover, the function
\[
    \beta\longmapsto \Delta(e^\beta)
\]
is strictly decreasing for \(\beta>0\).  Consequently,
\begin{equation}
    \Delta(\alpha)\leq \Delta(1),
    \label{eq:delta-alpha-cube-max}
\end{equation}
with equality if and only if \(\alpha=1\).
\end{proposition}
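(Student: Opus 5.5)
Symmetry is immediate: the map $(x_1,x_2,x_3)\mapsto(x_2,x_1,x_3)$ is an isometry of $\R^3$ carrying $D_\alpha$ onto $D_{\alpha^{-1}}$ and preserving $|x-y|$, so $\Delta(\alpha)=\Delta(\alpha^{-1})$.

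For monotonicity, I will compute the derivative directly. Set $\alpha=e^\beta$ and rescale to the fixed cube, $x=(\alpha u_1,\alpha^{-1}u_2,u_3)$ with $u\in[0,1]^3$. The Jacobian of the pair change of variables is $1$, so
\[
    \Delta(e^\beta)=\int_{[0,1]^3}\int_{[0,1]^3}
    \frac{\dd^3u\,\dd^3v}{\bigl(e^{2\beta}w_1^2+e^{-2\beta}w_2^2+w_3^2\bigr)^{1/2}},
    \qquad w=u-v .
\]
Because the kernel is a function of $w$ only, I will pass to the difference variable with weight $\prod_i(1-|w_i|)$ on $[-1,1]^3$. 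Differentiating under the integral sign (justified by dominated convergence, since the derivative of the kernel is bounded by a constant times $|w|^{-1}$ locally uniformly in $\beta$) gives
\[
    \frac{\dd}{\dd\beta}\Delta(e^\beta)
    =-\int_{[-1,1]^3}\prod_i(1-|w_i|)\,
    \frac{e^{2\beta}w_1^2-e^{-2\beta}w_2^2}{\bigl(e^{2\beta}w_1^2+e^{-2\beta}w_2^2+w_3^2\bigr)^{3/2}}\,\dd^3w .
\]

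The main step is to prove this is negative for $\beta>0$. I will rescale once more, writing $w_1=e^{-\beta}s$ and $w_2=e^{\beta}t$. This turns the kernel into the $\beta$-independent odd-in-swap function $(s^2-t^2)/(s^2+t^2+w_3^2)^{3/2}$. The weight becomes $(1-e^{-\beta}|s|)_+(1-e^{\beta}|t|)_+$, supported on $|s|\le e^\beta$ and $|t|\le e^{-\beta}$.

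Next I will antisymmetrize under the swap $s\leftrightarrow t$. The integral becomes $\tfrac12\int K(s,t,w_3)\,[W(s,t)-W(t,s)]$, where $K=(s^2-t^2)/(s^2+t^2+w_3^2)^{3/2}$ and $W(s,t)=(1-e^{-\beta}|s|)_+(1-e^{\beta}|t|)_+$. I then need to show that for $|s|>|t|$ one has $W(s,t)\ge W(t,s)$, with strict inequality on a set of positive measure. The expected obstacle is this elementary comparison, which amounts to checking
\[
    (1-e^{-\beta}a)_+(1-e^{\beta}b)_+ \ \ge\ (1-e^{-\beta}b)_+(1-e^{\beta}a)_+
    \qquad\text{for } a>b\ge 0,\ \beta>0 .
\]
There are two cases.
\begin{itemize}
\item If $e^\beta a\ge 1$, the right side vanishes, and the inequality is trivial.
\item Otherwise all four factors are positive. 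Expanding both sides, the difference is $(a-b)(e^\beta-e^{-\beta})>0$.
\end{itemize}
Hence the antisymmetrized integrand is nonnegative and positive on an open set, so the integral is strictly positive, and the derivative above is strictly negative for $\beta>0$.

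Strict decrease on $\beta>0$, combined with the symmetry $\Delta(\alpha)=\Delta(\alpha^{-1})$ and continuity at $\beta=0$, then yields $\Delta(\alpha)\le\Delta(1)$ with equality only at $\alpha=1$.
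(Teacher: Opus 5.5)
Your proof is correct, and it takes a genuinely different route from the paper's. The paper never differentiates $\Delta$. It writes $|x-y|^{-1}=\pi^{-1/2}\int_0^\infty t^{-1/2}e^{-t|x-y|^2}\,\dd t$, which factors $\Delta(\alpha)$ into one-dimensional Gaussian interval overlaps, $\Delta(\alpha)=\pi^{-1/2}\int_0^\infty t^{-1/2}I_\alpha(t)I_{\alpha^{-1}}(t)I_1(t)\,\dd t$. It then uses strict concavity of $u\mapsto\log I_{e^u}(t)$ (Lemma~\ref{lem:gaussian-interval-log-concavity}, proved by a fairly delicate two-range calculus estimate in Appendix~\ref{app:interval-overlap-log-concavity}). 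This shows that $I_{e^\beta}(t)I_{e^{-\beta}}(t)$ is strictly decreasing in $\beta>0$ for each fixed $t$.

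You instead differentiate under the integral sign. This is justified by your bound $|\partial_\beta Q^{-1/2}|\le Q^{-1/2}\le e^{|\beta|}|w|^{-1}$, where $Q=e^{2\beta}w_1^2+e^{-2\beta}w_2^2+w_3^2$. Your second rescaling then moves all of the $\beta$-dependence out of the kernel and into the tent weights. After the swap antisymmetrization, only the elementary comparison of $(1-e^{-\beta}a)_+(1-e^{\beta}b)_+$ with $(1-e^{-\beta}b)_+(1-e^{\beta}a)_+$ remains, and your two-case check of it is right.

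For $a>b>0$, that comparison is the tent-function analogue of the paper's lemma. It is exactly concavity of $u\mapsto\log(1-e^u)_+$, applied to the pair $\{\log a-\beta,\log b+\beta\}$, which has the same sum as $\{\log b-\beta,\log a+\beta\}$ and lies inside the interval that pair spans. For the tent this concavity is a one-line fact, which is why your argument can bypass the appendix entirely.

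Apart from mild regularity, your method also uses only that the kernel has the form $F(|w|^2)$ with $F$ decreasing. The paper's factorization instead requires the kernel to be a positive superposition of Gaussians. In exchange, the paper's route avoids differentiating a singular integral. It also isolates a reusable one-dimensional structural fact (strict log-concavity in log-length) that plugs directly into Jensen-type arguments over several factors.
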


\begin{proof}
The symmetry \(\Delta(\alpha)=\Delta(\alpha^{-1})\) follows by interchanging
the first two coordinate axes.

For \(L>0\) and \(t>0\), define the one-dimensional Gaussian interval
overlap
\begin{equation}
    I_L(t):=\int_0^L\int_0^L e^{-t(x-y)^2}\,\dd x\,\dd y.
    \label{eq:interval-overlap-main}
\end{equation}
Using
\[
    \frac{1}{|x-y|}
    =
    \frac{1}{\sqrt\pi}
    \int_0^\infty t^{-1/2}e^{-t|x-y|^2}\,\dd t,
\]
and Tonelli's theorem for the nonnegative integrand, we obtain
\begin{equation}
    \Delta(\alpha)
    =
    \frac{1}{\sqrt\pi}
    \int_0^\infty
    t^{-1/2}
    I_\alpha(t)I_{\alpha^{-1}}(t)I_1(t)\,\dd t.
    \label{eq:delta-alpha-overlap-representation}
\end{equation}
By Lemma~\ref{lem:gaussian-interval-log-concavity} in
Appendix~\ref{app:interval-overlap-log-concavity}, for every fixed \(t>0\)
the function
\[
    H_t(u):=\log I_{e^u}(t)
\]
is strictly concave in \(u\).  Hence, for \(\beta>0\),
\[
    \frac{\dd}{\dd\beta}
    \left(H_t(\beta)+H_t(-\beta)\right)
    =
    H_t'(\beta)-H_t'(-\beta)<0,
\]
because \(H_t'\) is strictly decreasing.  Therefore
\[
    I_{e^\beta}(t)I_{e^{-\beta}}(t)
\]
is strictly decreasing in \(\beta>0\) for every \(t>0\).  Since
\(I_1(t)>0\), the integrand in
\eqref{eq:delta-alpha-overlap-representation} is pointwise strictly
decreasing as a function of \(\beta>0\).  Integrating against the positive
measure \(\pi^{-1/2}t^{-1/2}\dd t\) gives that
\(\Delta(e^\beta)\) is strictly decreasing for \(\beta>0\).  The maximum is
therefore attained uniquely at \(\beta=0\), equivalently \(\alpha=1\).
\end{proof}

For the unit cube
\[
    C=[0,1]^3,
\]
we write
\begin{equation}
    \Delta_3(-1):=\Delta_C=\Delta(1)
    =
    \int_C\int_C\frac{\dd^3x\,\dd^3y}{|x-y|}.
    \label{eq:delta3-definition}
\end{equation}
Equivalently,
\begin{equation}
    \Delta_3(-1)
    =
    4\pi
    \braket{\chi_C}{L_0^{-1}\chi_C}_{L^2(\R^3)}.
    \label{eq:delta3-green}
\end{equation}
The known closed form is
\begin{align}
\Delta_3(-1)
&=
\frac{2}{5}(1+\sqrt2-2\sqrt3)-\frac{2\pi}{3}
-6\log2+2\log(1+\sqrt2)
\notag\\
&\quad
+12\log(1+\sqrt3)-4\log(2+\sqrt3).
\label{eq:delta3-closed-form}
\end{align}

\subsection{Scaling and deterministic reference energy}

For \(a>0\), set
\[
    D_a:=aD.
\]
The inverse-distance integral scales as
\begin{equation}
    \int_{D_a}\int_{D_a}\frac{\dd^3x\,\dd^3y}{|x-y|}
    =
    a^5\Delta_D,
    \label{eq:reference-delta-scaling}
\end{equation}
by the change of variables \(x=au\), \(y=av\).

Let two scalar test densities be uniformly distributed over \(D_a\) with
opposite total weights \(Q\) and \(-Q\):
\[
    \rho_{+,D,a}(x)=\frac{Q}{a^3}\chi_{D_a}(x),
    \qquad
    \rho_{-,D,a}(x)=-\frac{Q}{a^3}\chi_{D_a}(x),
\]
where \(|D|=1\).  For the reference inverse-distance pair kernel
\[
    K_\gamma(x,y)=\frac{\gamma}{|x-y|},
    \qquad
    \gamma>0,
\]
the mutual bilinear Green-kernel functional of these two reference densities
is
\begin{equation}
\begin{split}
    U_D(\gamma,Q,a)
    &:=
    \int_{D_a}\int_{D_a}
    \rho_{+,D,a}(x)K_\gamma(x,y)\rho_{-,D,a}(y)
    \dd^3x\,\dd^3y                                      \\
    &=
    -\frac{\gamma Q^2}{a}\Delta_D.
\end{split}
    \label{eq:reference-cell-energy}
\end{equation}
Set
\[
    \mathcal Q:=\gamma Q^2.
\]
Then
\begin{equation}
    U_D(\mathcal Q,a)
    =
    -\frac{\mathcal Q}{a}\Delta_D.
    \label{eq:reference-cell-energy-Qcal}
\end{equation}

For \(n^2\) identical cells, define the additive deterministic reference
energy
\begin{equation}
    U_D(\mathcal Q,n,a)
    :=
    n^2U_D(\mathcal Q,a)
    =
    -\frac{n^2}{a}\mathcal Q\Delta_D.
    \label{eq:reference-n2-cell-energy}
\end{equation}
For the aspect-ratio family, write
\begin{equation}
    U_\alpha(\mathcal Q,n,a)
    :=
    U_{D_\alpha}(\mathcal Q,n,a)
    =
    -\frac{n^2}{a}\mathcal Q\Delta(\alpha).
    \label{eq:aspect-reference-energy}
\end{equation}
By Proposition~\ref{prop:aspect-ratio-green-monotonicity}, for fixed
positive \(\mathcal Q\), \(n\), and \(a\), the magnitude
\(|U_\alpha(\mathcal Q,n,a)|\) is maximized in the plate-compatible
rectangular aspect-ratio family at \(\alpha=1\), uniquely.

For the selected cubical cell \(C\), define
\begin{equation}
    U_{\Delta}(\mathcal Q,n,a)
    :=
    U_C(\mathcal Q,n,a)
    =
    -\frac{n^2}{a}\mathcal Q\Delta_3(-1).
    \label{eq:n2-cube-energy}
\end{equation}
The scaling in \eqref{eq:n2-cube-energy} matches the scaling of the scalar
plate finite part in \eqref{eq:reference-plate-energy-na}: both are of the
form constant times \(n^2/a\).

\subsection{Reference-cell scalar comparison coefficient}
\label{sec:trace-ratio}

For a plate configuration with \(A=n^2a^2\), choose the reference
normalization
\[
    \mathcal Q=\hbar c.
\]
For the aspect-ratio family \(D_\alpha\), define
\begin{equation}
    \Theta_{\alpha,\tau}^{(N)}
    :=
    \frac{U_\tau^{(N)}}{U_\alpha(\hbar c,n,a)}
    =
    -\frac{a}{n^2\hbar c\,\Delta(\alpha)}U_\tau^{(N)}.
    \label{eq:regulated-theta-alpha-ratio}
\end{equation}
For the selected cubical reference cell, write
\begin{equation}
    \Theta_{\Delta,\tau}^{(N)}
    :=
    \Theta_{1,\tau}^{(N)}
    =
    \frac{U_\tau^{(N)}}{U_{\Delta}(\hbar c,n,a)}
    =
    -\frac{a}{n^2\hbar c\,\Delta_3(-1)}U_\tau^{(N)}.
    \label{eq:regulated-theta-ratio}
\end{equation}
The raw regulated quantity \(U_\tau^{(N)}\) is divergent as
\(\tau\to0^+\), so the comparison coefficient is defined at the level of
finite-part expectation:
\begin{equation}
    \overline{\Theta}_{\alpha,\mathcal R}^{(N)}
    :=
    \FP_{\mathcal R,\,\tau\to0^+}
    \E[\Theta_{\alpha,\tau}^{(N)}].
    \label{eq:expected-theta-alpha-def}
\end{equation}
For \(\alpha=1\), set
\begin{equation}
    \overline{\Theta}_{\Delta,\mathcal R}^{(N)}
    :=
    \overline{\Theta}_{1,\mathcal R}^{(N)}
    =
    \FP_{\mathcal R,\,\tau\to0^+}
    \E[\Theta_{\Delta,\tau}^{(N)}].
    \label{eq:expected-theta-def}
\end{equation}
These definitions concern finite-part expectations.  They do not require a
separately constructed renormalized random variable obtained from
\(\Theta_{\alpha,\tau}^{(N)}\) or \(\Theta_{\Delta,\tau}^{(N)}\)
configuration by configuration.

\begin{proposition}[Aspect-ratio comparison coefficient and cubical minimum]
\label{prop:expected-theta}
For \(N\) independent scalar channels in the large-area parallel-plate limit,
using the standard scalar finite-part subtraction and the normalization
\(\mathcal Q=\hbar c\), the plate-compatible rectangular aspect-ratio family
satisfies
\begin{equation}
    \overline{\Theta}_{\alpha}^{(N)}
    =
    \frac{N\pi^2}{1440\Delta(\alpha)}.
    \label{eq:expected-theta-alpha-N}
\end{equation}
Consequently,
\begin{equation}
    \overline{\Theta}_{\alpha}^{(N)}
    \geq
    \overline{\Theta}_{\Delta}^{(N)}
    =
    \frac{N\pi^2}{1440\Delta_3(-1)},
    \label{eq:expected-theta-N}
\end{equation}
with equality if and only if \(\alpha=1\).  In particular, for two scalar
channels,
\begin{equation}
    \overline{\Theta}_{\Delta}^{(2)}
    =
    \frac{\pi^2}{720\Delta_3(-1)}.
    \label{eq:expected-theta-two-channel}
\end{equation}
\end{proposition}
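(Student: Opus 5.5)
The plan is to reduce the statement to three earlier results: the linearity of the finite-part operation, the large-area plate value \eqref{eq:expected-stochastic-n-a}, and the aspect-ratio monotonicity in Proposition~\ref{prop:aspect-ratio-green-monotonicity}. The first step is to observe that, for fixed \(\alpha\), \(n\), \(a\), the prefactor
\[
    -\frac{a}{n^2\hbar c\,\Delta(\alpha)}
\]
in \eqref{eq:regulated-theta-alpha-ratio} does not depend on \(\tau\) or on the random source. It is finite and nonzero, since \(0<\Delta(\alpha)<\infty\) by the finiteness bound for \(\Delta_D\) and positivity of the integrand. Therefore expectation and the finite-part operation \(\FP_{\mathcal R,\tau\to0^+}\) both commute with multiplication by this constant. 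Rescaling by a \(\tau\)-independent constant rescales every coefficient \(a_\beta\), \(a_{\log}\), \(a_0\) in \eqref{eq:finite-part-expansion} by the same factor. Hence
\[
    \overline{\Theta}_{\alpha,\mathcal R}^{(N)}
    = -\frac{a}{n^2\hbar c\,\Delta(\alpha)}\,\E[U^{(N)}_{\mathcal R}].
\]

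The second step is to insert the plate finite part. By Proposition~\ref{prop:renormalized-equality}, applied channelwise with independent channels so that expectations add, and by \eqref{eq:expected-stochastic-n-a} with \(A=n^2a^2\), the large-area renormalized expectation is
\[
    \E[U^{(N)}_{\mathcal R}] = -\frac{n^2}{a}\,\frac{N\pi^2\hbar c}{1440}.
\]
Substituting, the factors \(n^2\), \(a\) and \(\hbar c\) cancel, and the two minus signs give \eqref{eq:expected-theta-alpha-N}.

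The one point needing care is the order of the large-area limit relative to the finite part and the division. The cleanest approach is to read the statement as defining \(\overline\Theta\) through the per-area finite part \eqref{eq:expected-stochastic-per-area-N}, multiplied by the fixed area \(n^2a^2\). Since the normalizing factor is area-independent once \(n\) is held fixed, the limits and the constant division commute trivially.

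The final step is comparison. Proposition~\ref{prop:aspect-ratio-green-monotonicity} gives \(\Delta(\alpha)\le\Delta(1)=\Delta_3(-1)\), with equality iff \(\alpha=1\). Since \(N\pi^2/1440>0\) and \(x\mapsto 1/x\) is strictly decreasing on \((0,\infty)\), this yields \eqref{eq:expected-theta-N} with the stated equality case. Setting \(N=2\) gives \(2\pi^2/1440=\pi^2/720\), which is \eqref{eq:expected-theta-two-channel}.

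I expect no substantive obstacle: all analytic content already lives in the earlier propositions. The only delicate point is making explicit that the finite-part prescription is linear and commutes with a \(\tau\)-independent rescaling.
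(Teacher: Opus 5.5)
Your proposal is correct and is essentially the paper's proof. You pull the \(\tau\)-independent factor \(-a/(n^2\hbar c\,\Delta(\alpha))\) through the expectation and the finite part, insert the large-area plate value \(-\frac{n^2}{a}\frac{N\pi^2\hbar c}{1440}\), and then use Proposition~\ref{prop:aspect-ratio-green-monotonicity} with the strict decrease of \(x\mapsto 1/x\). Your remark on linearity of \(\FP_{\mathcal R}\) is a justification the paper leaves implicit. On the large-area point, the phrase about holding \(n\) fixed would freeze the area rather than take the limit; the cleaner observation is that \(a/n^2=a^3/A\), so \(\Theta_{\alpha,\tau}^{(N)}\) is an area-independent multiple of the per-area energy and the limit passes directly through \eqref{eq:expected-stochastic-per-area-N}.
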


\begin{proof}
By \eqref{eq:regulated-theta-alpha-ratio},
\[
    \E[\Theta_{\alpha,\tau}^{(N)}]
    =
    -\frac{a}{n^2\hbar c\,\Delta(\alpha)}
    \E[U_\tau^{(N)}].
\]
Taking the same finite part used in the scalar parallel-plate computation
and using \eqref{eq:reference-plate-energy-na} gives
\[
    \overline{\Theta}_{\alpha}^{(N)}
    =
    -\frac{a}{n^2\hbar c\,\Delta(\alpha)}
    \left(
        -\frac{n^2}{a}\frac{N\pi^2\hbar c}{1440}
    \right)
    =
    \frac{N\pi^2}{1440\Delta(\alpha)}.
\]
This proves \eqref{eq:expected-theta-alpha-N}.  By
Proposition~\ref{prop:aspect-ratio-green-monotonicity},
\(\Delta(\alpha)\leq\Delta(1)=\Delta_3(-1)\), with equality if and only if
\(\alpha=1\).  Taking reciprocals gives the minimum statement
\eqref{eq:expected-theta-N}.  The two-channel formula follows by setting
\(N=2\).
\end{proof}

\begin{remark}[What is being compared]
The numerator in \eqref{eq:regulated-theta-alpha-ratio} is the stochastic
quadratic form whose finite-part expectation equals the scalar plate finite
part.  The denominator is the deterministic flat Green energy of \(n^2\)
rectangular reference cells scaled by the same separation length \(a\), with
normalization \(\mathcal Q=\hbar c\).  Within the plate-compatible rectangular
aspect-ratio family, the cubical cell is not inserted as an arbitrary shape:
it saturates the transverse Dirichlet scale, minimizes the leading
artificial-boundary coefficient in the mixed cell heat trace, and maximizes
the deterministic flat Green energy \(\Delta(\alpha)\).  Equivalently, it
minimizes the associated finite-part comparison coefficient.
\end{remark}

\begin{remark}[Common source of the three selection criteria]
The three criteria used above are not independent assumptions imposed on the
reference cell.  They are three consequences of the same constrained
aspect-ratio problem.  The admissible rectangular family
\[
    D_\alpha=[0,\alpha]\times[0,\alpha^{-1}]\times[0,1]
\]
is invariant under the involution \(\alpha\mapsto\alpha^{-1}\), and the
cubical cell \(D_1=C\) is the fixed point of this symmetry.  The lateral
spectral-gap and mixed heat-trace criteria reduce to arithmetic-geometric
mean inequalities under the constraint \(\ell_1\ell_2=a^2\), while the flat
Green-energy criterion follows from the strict log-concavity of Gaussian
interval overlaps proved in Appendix~\ref{app:interval-overlap-log-concavity}.
Thus the cube is selected in three compatible ways within the same
plate-adapted rectangular class: it maximizes the free lateral spectral
scale, minimizes the first shape-dependent artificial-boundary heat-trace
coefficient, and maximizes the deterministic flat Green energy
\(\Delta(\alpha)\).  Equivalently, it minimizes the associated finite-part
comparison coefficient.
\end{remark}

\begin{remark}[Conditional nature of the cell selection]
The selection of \(C_a=[0,a]^3\) is conditional on the rectangular product
cell class \eqref{eq:reference-rectangular-cell}, the plate-area constraint
\eqref{eq:reference-area-constraint}, and the free-boundary quadratic form
on artificial lateral faces.  The maximization of \(\Delta(\alpha)\) and the
minimization of \(\overline{\Theta}_{\alpha}^{(N)}\) are asserted only for
the plate-compatible rectangular aspect-ratio family
\(D_\alpha=[0,\alpha]\times[0,\alpha^{-1}]\times[0,1]\).  No assertion is
made that the cube optimizes \(\Delta_D\) over arbitrary unit-volume subsets
of \(\R^3\).
\end{remark}

\begin{remark}[Inverse of a finite-part expectation]
The inverse associated with \eqref{eq:expected-theta-two-channel} is the
inverse of the finite-part expectation,
\((\overline{\Theta}_{\Delta}^{(2)})^{-1}\).  It is not a statement about
\(\E[\Theta^{-1}]\) for a random variable.  In general,
\(\E[X^{-1}]\neq(\E[X])^{-1}\).
\end{remark}

\section{Normalization and scalar-channel conventions}
\label{sec:model-conventions}

This section records the conventions that connect the preceding formulas.  They are included to keep separate the numerical equalities proved above from additional physical identifications that would require further structure.

\subsection{Reference inverse-distance Green-kernel convention}

The reference-cell functional uses the kernel \(|x-y|^{-1}\) because this is
the Green-kernel scaling of \(L_0^{-1}\) in three flat dimensions.  With
\(L_0=-\Delta\) on \(\R^3\),
\[
    L_0^{-1}(x,y)=\frac{1}{4\pi|x-y|}.
\]
Thus \(\Delta_D=4\pi\braket{\chi_D}{L_0^{-1}\chi_D}\) is a deterministic
flat Green energy written in inverse-distance normalization.  Writing a
reference pair kernel as \(\gamma |x-y|^{-1}\) fixes the normalization of
these deterministic reference pairings.  The operator identity proved in
Theorem~\ref{thm:brane-restriction} is instead the statement
\[
    V_{B,5/2}=gL_B^{-1},
\]
with \(g=\kappa/(6\pi^2)\).  Choosing numerical values for \(g\), \(\gamma\),
or \(\mathcal Q=\gamma Q^2\) is additional input beyond the operator
restriction theorem.

\subsection{Scalar source convention}

The stochastic identity uses the generalized scalar source with covariance form
\[
    \E[\sigma(\varphi)\overline{\sigma(\psi)}]
    =\frac{\hbar c}{g}\braket{L_B^{3/4}\varphi}{L_B^{3/4}\psi}.
\]
This covariance is chosen so that the quadratic form with $gL_B^{-1}$ produces the scalar trace with one power $L_B^{1/2}$.  The equality
\[
    \E[U_\tau]
    =\frac{\hbar c}{2}\Tr(L_B^{1/2}e^{-\tau L_B})
\]
is therefore a trace identity under the stated covariance convention.

\subsection{Scalar-channel convention}

The integer $N$ counts independent scalar channels in the trace identity.  Thus
\[
    \frac{1}{A}\E[U_{\mathcal R}^{(N)}]
    =-\frac{N\pi^2\hbar c}{1440a^3}
\]
in the large-area parallel-plate limit with the standard scalar finite-part subtraction.  The case $N=2$ is a scalar-channel doubling of the one-channel coefficient.  A vector-field treatment with transverse modes and boundary conditions is a different operator problem.

\subsection{Reference-cell comparison convention}

The coefficient \(\overline{\Theta}_{\Delta}^{(N)}\) is formed only after
three choices have been specified: the scalar finite-part prescription, the
reference normalization \(\mathcal Q=\hbar c\), and the plate-compatible
reference-cell class.  In the rectangular product family
\[
    D_\alpha=[0,\alpha]\times[0,\alpha^{-1}]\times[0,1],
    \qquad
    \alpha>0,
\]
with free artificial lateral faces and Dirichlet plate faces, the cubical
cell \(D_1=C\) is selected by
Propositions~\ref{prop:spectral-selection-cube},
\ref{prop:heat-trace-cube-selection}, and
\ref{prop:aspect-ratio-green-monotonicity}.  The comparison coefficient may
first be evaluated on the aspect-ratio family:
\[
    \overline{\Theta}_{\alpha}^{(N)}
    =
    \frac{N\pi^2}{1440\Delta(\alpha)}.
\]
Since \(\Delta(\alpha)\) is uniquely maximized at \(\alpha=1\), this
coefficient is uniquely minimized at the cubical cell.  The displayed
cubical value is therefore
\[
    \overline{\Theta}_{\Delta}^{(N)}
    =
    \overline{\Theta}_{1}^{(N)}
    =
    \frac{N\pi^2}{1440\Delta_3(-1)}.
\]
This coefficient records a Green-energy calibration of the scalar plate
finite part relative to the selected cubical reference cell.  It is not an
additional finite-part prescription and it does not alter the stochastic
trace identity.

\section{Scope and limitations}
\label{sec:scope-limitations}

The results above are scalar spectral identities.  They do not constitute a derivation of the electromagnetic Casimir effect, which requires the appropriate vector operator, gauge constraints, and boundary conditions.  They also do not determine the normalization constants $g$, $\kappa$, $\gamma$, or $\mathcal Q$ from first principles.  The fractional ambient operator $L_M^{-5/2}$ is a Riesz-type model operator, not the ordinary Green operator of a local six-dimensional Laplacian.  Finally, the reference-cell comparison coefficient is conditional on the plate-compatible rectangular product class, the free-boundary treatment of artificial lateral faces, and the normalization $\mathcal Q=\hbar c$.  The cubical value is an extremal value only within the plate-compatible rectangular aspect-ratio family $D_\alpha=[0,\alpha]\times[0,\alpha^{-1}]\times[0,1]$; no optimality over arbitrary unit-volume subsets of $\R^3$ is asserted.

\section{Summary and conclusion}
\label{sec:summary}

The construction can be compressed into four formal packages.

\paragraph{Operator/restriction package.}
Take
\[
    M=B\times\R^3_\perp,
    \qquad
    L_M=L_B-\Delta_\perp,
    \qquad
    V_M=\kappa L_M^{-5/2}.
\]
Define the brane restriction by the operator-valued transverse momentum integral
\[
    V_{B,5/2}
    =\kappa\int_{\R^3}\frac{\dd^3q}{(2\pi)^3}(L_B+|q|^2)^{-5/2}.
\]
Then
\[
    V_{B,5/2}=gL_B^{-1},
    \qquad
    g=\frac{\kappa}{6\pi^2}.
\]
More generally, in codimension $m$, the restriction of $(L_B-\Delta_\perp)^{-s}$ is proportional to $L_B^{m/2-s}$ whenever $s>m/2$.

\paragraph{Source/statistics package.}
Take
\[
    \sigma_\tau=\left(\frac{\hbar c}{g}\right)^{1/2}L_B^{3/4}e^{-\tau L_B/2}\xi.
\]
Then
\[
    \E[\sigma_\tau\otimes\sigma_\tau^*]
    =\frac{\hbar c}{g}L_B^{3/2}e^{-\tau L_B}.
\]
The exponent $3/4$ is fixed by the requirement that this covariance contribute $L_B^{3/2}$ when paired with the brane Green operator $gL_B^{-1}$.

\paragraph{Renormalization package.}
Use the same heat-kernel regulator and the same specified finite-part subtraction for the stochastic interaction and the scalar spectral trace.  Then
\[
    \E[U_\tau]
    =\frac{\hbar c}{2}\Tr(L_B^{1/2}e^{-\tau L_B})
\]
for $\tau>0$, and hence
\[
    \E[U_{\mathcal{R}}]=E_{\mathrm{Cas},\mathcal{R}}(L_B).
\]
For parallel scalar plates,
\[
    \frac{1}{A}\E[U_{\mathcal{R}}^{(1)}]
    =-\frac{\pi^2\hbar c}{1440a^3}.
\]
For two scalar channels,
\[
    \frac{1}{A}\E[U_{\mathcal{R}}^{(2)}]
    =-\frac{\pi^2\hbar c}{720a^3}.
\]

\paragraph{Reference Green-energy package.}
Write the plate area as \(A=n^2a^2\).  In the plate-compatible rectangular
cell class
\[
    R_{\ell_1,\ell_2,a}
    =
    [0,\ell_1]\times[0,\ell_2]\times[0,a],
    \qquad
    \ell_1\ell_2=a^2,
\]
the artificial lateral faces are treated by the unconstrained \(H^1\)
energy form, hence by the Neumann realization.  The first positive lateral
Neumann scale satisfies
\[
    \mu_\parallel(\ell_1,\ell_2)
    \leq
    \frac{\pi^2}{a^2},
\]
with equality if and only if \(\ell_1=\ell_2=a\).  The same cubical cell
also minimizes the \(t^{-1}\) artificial-boundary coefficient in the mixed
cell heat trace.

For the same aspect-ratio family
\[
    D_\alpha=[0,\alpha]\times[0,\alpha^{-1}]\times[0,1],
\]
the deterministic flat Green energy
\[
    \Delta(\alpha)
    =
    \int_{D_\alpha}\int_{D_\alpha}\frac{\dd^3x\,\dd^3y}{|x-y|}
\]
is uniquely maximized at \(\alpha=1\).  Therefore the associated comparison
coefficient is uniquely minimized at the cubical cell.  Thus the selected
reference cell is \(C_a=[0,a]^3\).

For \(C=[0,1]^3\), define
\[
    \Delta_3(-1)
    =
    \int_C\int_C\frac{\dd^3x\,\dd^3y}{|x-y|}
    =
    4\pi\braket{\chi_C}{L_0^{-1}\chi_C}.
\]
For \(n^2\) cubical reference cells and normalization \(\mathcal Q=\hbar c\),
the deterministic reference energy is
\[
    U_{\Delta}(\hbar c,n,a)
    =
    -\frac{n^2}{a}\hbar c\,\Delta_3(-1).
\]
Comparing the scalar finite-part stochastic plate energy with this selected
reference Green energy gives the cubical minimum of the aspect-ratio family,
\[
    \overline{\Theta}_{\Delta}^{(N)}
    =
    \min_{\alpha>0}\overline{\Theta}_{\alpha}^{(N)}
    =
    \frac{N\pi^2}{1440\Delta_3(-1)}.
\]

The construction is therefore a chain of scalar spectral identities: a codimension-three restricted Riesz mediator gives the brane Green operator, and the heat-regularized Gaussian scalar source turns the corresponding quadratic form into the scalar finite-part trace in expectation.

\appendix

\section{Derivation of the scalar parallel-plate coefficient}
\label{app:parallel-zeta}

This appendix recalls the standard zeta-regularized calculation for one scalar Dirichlet channel between parallel plates.  Let the plates be separated by distance $a$, and take the large-area limit in the two lateral directions.  Formally,
\begin{equation}
    \frac{E(s)}{A}
    =\frac{\hbar c}{2}\mu^{2s}\sum_{n=1}^{\infty}\int_{\R^2}\frac{\dd^2k}{(2\pi)^2}
    \left(k^2+\left(\frac{\pi n}{a}\right)^2\right)^{1/2-s},
    \label{eq:zeta-regularized-plate}
\end{equation}
where $s$ is initially taken large enough for convergence and then analytically continued to $s=0$.  The parameter $\mu$ keeps dimensions fixed and drops out of the finite plate-dependent term at $s=0$.

Using the dimensional integral identity
\begin{equation}
    \int_{\R^d}\frac{\dd^dk}{(2\pi)^d}(k^2+m^2)^{-\nu}
    =\frac{1}{(4\pi)^{d/2}}\frac{\Gamma(\nu-d/2)}{\Gamma(\nu)}(m^2)^{d/2-\nu},
    \label{eq:dimensional-integral}
\end{equation}
with $d=2$ and $\nu=s-1/2$, we obtain
\begin{align}
    \frac{E(s)}{A}
    &=\frac{\hbar c}{2}\mu^{2s}\frac{1}{4\pi}
      \frac{\Gamma(s-3/2)}{\Gamma(s-1/2)}
      \sum_{n=1}^{\infty}
      \left(\frac{\pi n}{a}\right)^{3-2s}
      \notag\\
    &=\frac{\hbar c}{8\pi}\mu^{2s}
      \frac{\Gamma(s-3/2)}{\Gamma(s-1/2)}
      \left(\frac{\pi}{a}\right)^{3-2s}\zeta(2s-3).
    \label{eq:plate-zeta-expression}
\end{align}
At $s=0$,
\[
    \frac{\Gamma(-3/2)}{\Gamma(-1/2)}=-\frac{2}{3},
    \qquad
    \zeta(-3)=\frac{1}{120}.
\]
Therefore
\begin{equation}
    \frac{E(0)}{A}
    =\frac{\hbar c}{8\pi}\left(-\frac{2}{3}\right)
     \left(\frac{\pi}{a}\right)^3\frac{1}{120}
    =-\frac{\pi^2\hbar c}{1440a^3}.
    \label{eq:scalar-plate-result-app}
\end{equation}
This is \eqref{eq:scalar-casimir-per-area}.

\section{The cube integral and its closed form}
\label{app:box-integral}

The cube integral
\[
    \Delta_3(-1)=\int_{[0,1]^3}\int_{[0,1]^3}\frac{\dd^3x\,\dd^3y}{|x-y|}
\]
can be reduced to a three-dimensional integral by the change of variables $r=x-y$.  The difference $r_i$ in each coordinate ranges over $[-1,1]$, and the measure of pairs with coordinate difference $r_i$ contributes a factor $(1-|r_i|)$.  Hence
\begin{equation}
    \Delta_3(-1)
    =\int_{[-1,1]^3}\frac{(1-|r_1|)(1-|r_2|)(1-|r_3|)}{(r_1^2+r_2^2+r_3^2)^{1/2}}\dd^3r.
    \label{eq:delta-reduced}
\end{equation}
The singularity at $r=0$ is integrable in three dimensions.  The known closed form is
\begin{align}
\Delta_3(-1)
&=\frac{2}{5}(1+\sqrt2-2\sqrt3)-\frac{2\pi}{3}-6\log2+2\log(1+\sqrt2)
\notag\\
&\quad+12\log(1+\sqrt3)-4\log(2+\sqrt3).
\end{align}
This is the same value quoted in \eqref{eq:delta3-closed-form}.  The integral is sometimes discussed in the literature on box integrals and line picking.

\section{Log-concavity of Gaussian interval overlaps}
\label{app:interval-overlap-log-concavity}

This appendix proves the one-dimensional log-concavity lemma used in
Proposition~\ref{prop:aspect-ratio-green-monotonicity}.  For \(L>0\) and
\(t>0\), set
\[
    I_L(t)=\int_0^L\int_0^L e^{-t(x-y)^2}\,\dd x\,\dd y.
\]

\begin{lemma}[Strict log-concavity in logarithmic length]
\label{lem:gaussian-interval-log-concavity}
For every fixed \(t>0\), the function
\[
    u\longmapsto \log I_{e^u}(t)
\]
is strictly concave on \(\R\).
\end{lemma}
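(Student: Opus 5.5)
The plan is to reduce strict concavity of \(u\mapsto\log I_{e^u}(t)\) to a sign statement about one explicit function of the length \(L\), and to prove that sign by differentiating repeatedly. Fix \(t>0\) and write \(F(L):=I_L(t)\). Set
\[
g(r):=e^{-tr^2},\qquad G(L):=\int_0^L g(r)\,\dd r,\qquad H(L):=\int_0^L G(r)\,\dd r .
\]
Differentiating the double integral in its upper limits gives \(F'(L)=2G(L)\) and \(F''(L)=2g(L)\). Since \(F(0)=0\), this gives \(F=2H\).

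With \(L=e^u\) we have \(\frac{\dd}{\dd u}=L\frac{\dd}{\dd L}\). Hence
\[
\frac{\dd^2}{\dd u^2}\log F=\frac{L(F'+LF'')F-L^2F'^2}{F^2}.
\]
Strict concavity on \(\R\) is therefore equivalent to \(F'F+LF''F-LF'^2<0\) for every \(L>0\). After substituting \(F=2H\), \(F'=2G\), \(F''=2g\) and dividing by \(4\), this becomes
\[
\Phi(L):=L\,G^2-G\,H-L\,g\,H>0\qquad(L>0).
\]

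The main step is to show \(\Phi>0\) by differentiating until the sign becomes obvious, using \(\Phi(0)=0\).

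\begin{enumerate}[label=(\roman*)]
\item Using \(G'=g\), \(H'=G\) and \(g'=-2trg\), a direct computation gives
\[
\Phi'(L)=g(L)\,\Psi(L),\qquad \Psi(L):=L\,G-2H+2tL^2H .
\]
\item The obstacle is already visible here. Because \(G\) is concave with \(G(0)=0\), we have \(H\ge LG/2\), so \(LG-2H\le0\). The positivity of \(\Psi\) must therefore come from the term \(2tL^2H\). To handle this, differentiate \(\Psi\) twice more. One gets \(\Psi(0)=0\) and
\[
\Psi'(L)=Lg-G+4tLH+2tL^2G ,
\]
so that \(\Psi'(0)=0\).
\item The next derivative simplifies: the \(g\) and \(Lg'\) terms cancel against \(2tL^2g\). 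The result is
\[
\Psi''(L)=4t\,H(L)+8t\,L\,G(L).
\]
This is strictly positive for \(L>0\) because \(G,H>0\) there.
\end{enumerate}

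Integrating back up from \(0\), each stage inherits strict positivity on \((0,\infty)\):
\[
\Psi''>0\ \Rightarrow\ \Psi'>0\ \Rightarrow\ \Psi>0\ \Rightarrow\ \Phi'=g\Psi>0\ \Rightarrow\ \Phi>0 .
\]
Thus the second \(u\)-derivative of \(\log I_{e^u}(t)\) is strictly negative for every \(u\in\R\), which proves the lemma.

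The only delicate part is the bookkeeping of the derivatives of \(\Phi\) and \(\Psi\). In particular, the sign of \(\Phi\) is not immediate from concavity of \(G\), for the reason noted in step (ii). If the differentiation chain were to fail, a fallback would be the scaling identity \(I_L(t)=L^2J(tL^2)\) with \(J(s)=\E[e^{-s(X-Y)^2}]\), where \(X,Y\) are independent and uniform on \([0,1]\). This reduces the claim to a variance bound under the exponentially tilted law. I expect the direct chain above to close, however, since each derivative is explicit.
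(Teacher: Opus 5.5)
Your proof is correct. Up to its last step it follows the paper's reduction; the difference is in how you show the key auxiliary function is positive.

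\textbf{Where the two proofs coincide.} After rescaling to $t=1$:
\begin{itemize}
\item your $\Phi$ is exactly one half of the paper's $h$;
\item your identity $\Phi'=g\Psi$ is the paper's $h'=2Ek$;
\item your $\Psi=LG-2H+2tL^2H$ is exactly the paper's $k(r)=rA(2r^2-1)+(1-r^2)(1-E)$, as one sees by substituting $H=rA-(1-E)/2$.
\end{itemize}

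\textbf{Where they diverge.} The paper proves $k>0$ by splitting at $r=1/\sqrt2$. On the two ranges it uses three auxiliary bounds: $A(r)\le r$, $1-e^{-x}\ge x-x^2/2$, and $1-E(r)\le 2rA(r)$. You instead differentiate twice more, use $\Psi(0)=\Psi'(0)=0$, and arrive at the manifestly positive
\[
\Psi''=4tH+8tLG.
\]
I checked this computation. The terms $Lg'$ and $2tL^2g$ cancel as you claim. At $t=1$ one gets $\Psi'=(6L^2-1)A+3LE-2L$ and $\Psi''=12LA+2E-2$, which agrees with $4H+8LA$.

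\textbf{What each route buys.} Your closing step is shorter and needs no case split and no auxiliary inequalities; positivity follows from $G,H>0$ alone. The paper's rescaling to $t=1$ only makes the $t$-independence visible, and you carry $t$ through without difficulty.

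\textbf{Minor wording point.} A strictly negative second derivative is sufficient for strict concavity, not equivalent to it. Your argument only uses sufficiency, so nothing is lost.
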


\begin{proof}
By the change of variables \(X=\sqrt t\,x\), \(Y=\sqrt t\,y\),
\[
    I_L(t)=t^{-1}J(L\sqrt t),
\]
where
\[
    J(r):=\int_0^r\int_0^r e^{-(X-Y)^2}\,\dd X\,\dd Y
    =2\int_0^r(r-s)e^{-s^2}\,\dd s.
\]
Multiplying by the positive constant \(t^{-1}\) and translating the variable
\(u\) by \((1/2)\log t\) do not affect strict concavity.  It is therefore
enough to prove that
\[
    w\longmapsto \log J(e^w)
\]
is strictly concave.

Since the factor \(2\) in \(J\) is irrelevant for logarithmic concavity, set
\[
    j(r):=\frac{1}{2}J(r)=\int_0^r(r-s)e^{-s^2}\,\dd s.
\]
Introduce
\[
    A(r):=\int_0^r e^{-s^2}\,\dd s,
    \qquad
    E(r):=e^{-r^2},
    \qquad
    B(r):=\int_0^r s e^{-s^2}\,\dd s=\frac{1-E(r)}{2}.
\]
Then
\[
    j(r)=rA(r)-B(r),
    \qquad
    j'(r)=A(r),
    \qquad
    j''(r)=E(r).
\]
Let \(r=e^w\).  A direct differentiation gives
\begin{equation}
    \frac{\dd^2}{\dd w^2}\log j(e^w)
    =
    \frac{r(A(r)+rE(r))j(r)-r^2A(r)^2}{j(r)^2}.
    \label{eq:appendix-H-second-derivative}
\end{equation}
Thus it remains to prove that the numerator in
\eqref{eq:appendix-H-second-derivative} is negative for every \(r>0\).
Equivalently, since \(j(r)=rA(r)-B(r)\), we must prove
\begin{equation}
    B(r)(A(r)+rE(r))-r^2A(r)E(r)>0.
    \label{eq:appendix-core-positivity}
\end{equation}
Define
\[
    h(r):=(1-E(r))(A(r)+rE(r))-2r^2A(r)E(r).
\]
Then \eqref{eq:appendix-core-positivity} is equivalent to \(h(r)>0\).  Since
\(h(0)=0\), it is enough to show \(h'(r)>0\) for \(r>0\).  Differentiating,
using \(A'(r)=E(r)\) and \(E'(r)=-2rE(r)\), gives
\begin{equation}
    h'(r)=2E(r)k(r),
    \label{eq:appendix-h-prime}
\end{equation}
where
\begin{equation}
    k(r):=rA(r)(2r^2-1)+(1-r^2)(1-E(r)).
    \label{eq:appendix-k-definition}
\end{equation}
We prove \(k(r)>0\) in two ranges.

First let \(0<r\leq 1/\sqrt2\).  Then \(2r^2-1\leq0\), and
\(A(r)\leq r\), so
\[
    rA(r)(2r^2-1)
    \geq
    r^2(2r^2-1).
\]
Also, with \(x=r^2\), the elementary inequality
\(1-e^{-x}\geq x-x^2/2\) gives
\[
    1-E(r)\geq r^2-\frac{r^4}{2}.
\]
Therefore
\[
\begin{split}
    k(r)
    &\geq
    r^2(2r^2-1)
    +(1-r^2)\left(r^2-\frac{r^4}{2}\right)  \\
    &=
    \frac12 r^4(1+r^2)>0.
\end{split}
\]

Now let \(r\geq1/\sqrt2\).  Then \(2r^2-1\geq0\).  Since
\[
    1-E(r)
    =\int_0^r 2s e^{-s^2}\,\dd s
    \leq
    2r\int_0^r e^{-s^2}\,\dd s
    =2rA(r),
\]
we have \(A(r)\geq(1-E(r))/(2r)\).  Hence
\[
\begin{split}
    k(r)
    &\geq
    \frac{1-E(r)}{2}(2r^2-1)
    +(1-r^2)(1-E(r))  \\
    &=
    \frac{1-E(r)}{2}>0.
\end{split}
\]
Thus \(k(r)>0\) for all \(r>0\).  By \eqref{eq:appendix-h-prime},
\(h'(r)>0\) for all \(r>0\), and since \(h(0)=0\), \(h(r)>0\) for all
\(r>0\).  This proves \eqref{eq:appendix-core-positivity}, hence the second
derivative in \eqref{eq:appendix-H-second-derivative} is strictly negative.
Therefore \(w\mapsto\log J(e^w)\), and hence
\(u\mapsto\log I_{e^u}(t)\), is strictly concave.
\end{proof}

\section{Operator convention for the inverse-distance kernel normalization}
\label{app:inverse-distance-normalization}

On $\R^3$ with the positive Laplacian $L_0=-\Delta$, the Green operator satisfies
\[
    L_0^{-1}(x,y)=\frac{1}{4\pi|x-y|}.
\]
Thus a scalar pair kernel with operator normalization $\lambda L_0^{-1}$ has kernel
\[
    \frac{\lambda}{4\pi|x-y|}.
\]
For two scalar source densities $\rho_1,\rho_2$, the corresponding bilinear interaction is
\[
    \braket{\rho_1}{\lambda L_0^{-1}\rho_2}
    =\frac{\lambda}{4\pi}\int\int\frac{\rho_1(x)\rho_2(y)}{|x-y|}\dd^3x\dd^3y.
\]
If $\rho_+$ and $\rho_-$ are uniform opposite source densities of total weights $Q$ and $-Q$ over a cube of side $a$, this convention agrees with \eqref{eq:reference-cell-energy-Qcal} after setting
\[
    \gamma=\frac{\lambda}{4\pi},
    \qquad
    \mathcal Q=\frac{\lambda Q^2}{4\pi}.
\]
The normalization used in Section~\ref{sec:reference-green-normalization} is therefore the inverse-distance Green-kernel normalization associated with the scalar operator $L_0^{-1}$; for the cube reference cell it specializes to \eqref{eq:n2-cube-energy}.

\end{document}